
\documentclass[12pt]{amsart}
\usepackage{latexsym}
\usepackage{amssymb}
\usepackage{amscd}
\usepackage{mathrsfs}
\usepackage{rotating}
\setlength{\textwidth}{15.0truecm}
\setlength{\textheight}{22.5truecm}
\setlength{\topmargin}{0mm}
\setlength{\oddsidemargin}{0.3cm}
\setlength{\evensidemargin}{0.3cm}
\renewcommand\a{\alpha}
\renewcommand\b{\beta}

\renewcommand\d{\delta}
\newcommand\la{\lambda}
\newcommand\z{\zeta}
\newcommand\e{\eta}
\renewcommand\th{\theta}

\newcommand\s{\sigma}
\newcommand\x{\chi}
\newcommand\f{\phi}
\newcommand\vf{\varphi}

\renewcommand\r{\rho}

\newcommand\vL{\varLambda}

\newcommand{\vT}{\varTheta}

\newcommand\ve{\varepsilon}

\newcommand\Fq{{\mathbf F}_q}

\newcommand\Ql{\bar{\mathbf Q}_l}

\newcommand\BQ{\mathbf Q}

\newcommand\BC{\mathbf C}

\newcommand\BZ{\mathbf Z}

\newcommand\Bm{\mathbf m}

\newcommand\Bv{\mathbf v}
\newcommand\Bk{\mathbf k}

\newcommand\Bla{\boldsymbol\lambda}

\newcommand\Bmu{\boldsymbol\mu}
\newcommand\Bnu{\boldsymbol\nu}

\newcommand\Bxi{\boldsymbol{\xi}}

\newcommand\CP{\mathcal{P}}

\newcommand\CX{ \mathcal{X}}

\newcommand\SG{\mathscr{G}}
\newcommand\SL{\mathscr{L}}

\newcommand\SO{\mathscr{O}}
\newcommand\SP{\mathscr{P}}
\newcommand\SQ{\mathscr{Q}}
\newcommand\SH{\mathscr{H}}

\newcommand\SX{\mathscr{X}}

\newcommand\Fo{\mathfrak o}

\newcommand\iv{^{-1}}
\newcommand\wh{\widehat}
\newcommand\wt{\widetilde}
\newcommand\wg{^{\wedge}}

\newcommand\ol{\overline}

\newcommand\trleq{\trianglelefteq}
\newcommand\trreq{\trianglerighteq}

\newcommand\hra{\hookrightarrow}

\newcommand\lv{\prec}
\newcommand\gv{\succ}

\newcommand\IC{\operatorname{IC}}

\newcommand\Hom{\operatorname{Hom}}
\newcommand\End{\operatorname{End}}

\newcommand\reg{_{\operatorname{reg}}}

\newcommand\unip{\operatorname{uni}}
\newcommand\uni{_{\operatorname{uni}}}

\newcommand\lp{\operatorname{\!\langle\!}}
\newcommand\rp{\operatorname{\!\rangle\!}}
\renewcommand\Im{\operatorname{Im}}

\newcommand{\isom}{\,\raise2pt\hbox{$\underrightarrow{\sim}$}\,}
\numberwithin{equation}{section}

\newtheorem{thm}{Theorem}[section]
\newtheorem{lem}[thm]{Lemma}
\newtheorem{cor}[thm]{Corollary}
\newtheorem{prop}[thm]{Proposition}

\def \para#1{\par\medskip\textbf{#1}
              \addtocounter{thm}{1}}

\def \remark#1{\par\medskip\noindent
                \textbf{Remark #1}
                \addtocounter{thm}{1}}

\begin{document}
\setlength{\baselineskip}{4.9mm}
\setlength{\abovedisplayskip}{4.5mm}
\setlength{\belowdisplayskip}{4.5mm}
\renewcommand{\theenumi}{\roman{enumi}}
\renewcommand{\labelenumi}{(\theenumi)}
\renewcommand{\thefootnote}{\fnsymbol{footnote}}
\renewcommand{\thefootnote}{\fnsymbol{footnote}}
\allowdisplaybreaks[2]
\parindent=20pt

\pagestyle{myheadings}
\medskip
\begin{center}
 {\bf Kostka functions associated to complex reflection groups} 
\end{center}

\par\bigskip

\begin{center}
Toshiaki Shoji
\\  
\vspace{0.5cm}




\end{center}

\title{}

\begin{abstract}
Kostka functions $K^{\pm}_{\Bla, \Bmu}(t)$ associated to complex reflection groups are 
a generalization of Kostka polynomials, which are indexed by a pair $\Bla, \Bmu$
of $r$-partitions of $n$ (and by the sign $+, -$).  It is expected that there exists a close relationship
between those Kostka functions and the intersection cohomology associated 
to the enhanced variety $\SX$ of level $r$.
In this paper, we study combinatorial properties of $K^{\pm}_{\Bla,\Bmu}(t)$ based on 
the geometry of $\SX$.  
In paticular, we show that in the case where $\Bmu = (-,\dots, -,\mu^{(r)})$
(and for arbitrary $\Bla$), $K^-_{\Bla, \Bmu}(t)$ has a Lascoux-Sch\"utzenberger 
type combinatorial description. 
\end{abstract}

\maketitle
\pagestyle{myheadings}

\begin{center}
{\sc Introduction}
\end{center}
\par\bigskip
In 1981, Lusztig gave a geometric interpretation of Kostka polynomials 
in the following sense; let $V$ be an $n$-dimensional vector space 
over an algebraically closed field, and put $G = GL(V)$. Let $\SP_n$ 
be the set of partitions of $n$.  Let $\SO_{\la}$ be the 
unipotent class in $G$ labelled by $\la \in \SP_n$, and $K = \IC(\ol \SO_{\la}, \Ql)$
the intersection cohomology associated to the closure $\ol\SO_{\la}$ of $\SO_{\la}$.
Let $K_{\la,\mu}(t)$ be the Kostka polynomial indexed by $\la, \mu \in \SP_n$, and 
$\wt K_{\la,\mu}(t) = t^{n(\mu)}K_{\la,\mu}(t\iv)$ the modified Kostka polynomial 
(see 1.1 for the definition $n(\mu)$). Lusztig proved that 

\begin{equation*}
\tag{0.1}
\wt K_{\la,\mu}(t) = t^{n(\la)}\sum_{i \ge 0}\dim (\SH^{2i}_xK)t^i
\end{equation*}
for $x \in \SO_{\mu} \subset \ol\SO_{\la}$, where $\SH^{2i}_xK$ is the stalk at 
$x$ of the $2i$-th cohomology sheaf $\SH^{2i}K$ of $K$. 
(0.1) implies that $K_{\la,\mu}(t) \in \BZ_{\ge 0}[t]$. 
\par
Let $\SP_{n,r}$ be the set of $r$-tuple of partitions 
$\Bla =  (\la^{(1)}, \dots, \la^{(r)})$ such that $\sum_{i=1}^r |\la^{(i)}| = n$
(we write $|\la^{(i)}| = m$ if $\la^{(i)} \in \SP_m$).
In [S1], [S2], Kostka functions $K^{\pm}_{\Bla,\Bmu}(t)$ 
associated to complex reflections groups (depending on the signs $+, -$) are introduced, 
which are apriori rational functions in $t$ indexed by $\Bla, \Bmu \in \SP_{n,r}$.    
In the case where $r = 2$ (in this case $K^-_{\Bla,\Bmu}(t) = K^+_{\Bla, \Bmu}(t)$), 
it is proved in [S2] that $K^{\pm}_{\Bla, \Bmu}(t) \in \BZ[t]$. 
In this case, Achar-Henderson [AH] proved that those (generalized) Kostka polynomials 
have a geometric interpretation in the following sense; under the previous notation, 
consider the variety $\SX = G \times V$ on which $G$ acts naturally. Put 
$\SX\uni = G\uni \times V$, where $G\uni$ is the set of unipotent elements in $G$. 
$\SX\uni$ is a $G$-stable subset of $\SX$, and is isomorphic 
to the enhanced nilpotent cone introduced by [AH]. 
It is known by [AH], [T] that $\SX\uni$ has finitely many $G$-orbits, which are 
naturally parametrized by $\SP_{n,2}$. 
They proved in [AH] that the modified Kostka polynomial $\wt K^{\pm}_{\Bla, \Bmu}(t)$
$(\Bla, \Bmu \in \SP_{n,2}$), defined 
in a similar way as in the original case, 
can be written as in (0.1) in terms of the intersection cohomology 
associated to the closure $\ol\SO_{\Bla}$ of the $G$-orbit $\SO_{\Bla} \subset \SX\uni$.
\par
In the case where $r = 2$, the interaction of geometric properties and combinatorial 
properties of Kostka polynomials was studied in [LS]. 
In particular, it was proved that in the special case where $\Bmu = (-,\mu^{(2)})$ 
(and for arbitrary $\Bla \in \SP_{n,2}$), $K_{\Bla,\Bmu}(t)$ has a combinatorial description 
analogous to Lascoux-Sch\"utzenberger theorem  for the original Kostka polynomials 
([M, III, (6.5)]). 
\par 
We now consider the variety $\SX = G \times V^{r-1}$ for an integer $r \ge 1$, on which $G$ 
acts diagonally, and let $\SX\uni = G\uni \times V^{r-1}$ be the $G$-stable subset of $\SX$.
The variety $\SX$ is called the enhanced variety of level $r$. 
In [S4], the relationship between Kostka functions 
$K^{\pm}_{\Bla, \Bmu}(t)$ indexed by  
$\Bla, \Bmu \in \SP_{n,r}$ and the geometry of $\SX\uni$ was studied. 
In contrast to the case where $r = 1,2$, $\SX\uni$ has infinitely many $G$-orbits if 
$r \ge 3$.  A partition $\SX\uni = \coprod_{\Bla \in \SP_{n,r}}X_{\Bla}$ into
$G$-stable pieces $X_{\Bla}$ was constructed in [S3], 
and some formulas expressing the Kostka functions 
in terms of the intersection cohomology associated to the closure of $X_{\Bla}$ 
were obtained in [S4], 
though it is a partial generalization of the result of Achar-Henderson for the case $r = 2$. 
\par
In this paper, we prove a formula (Theorem 2.6) which is a generalization 
of the formula in [AH, Theorem 4.5] (and also in [FGT (11)]) to arbitrary $r$. 
Combined this formula with the results in [S4], 
we extend some results in [LS] to arbitrary $r$.  In particular, we show 
in the special case where $\Bmu = (-,\dots,-,\mu^{(r)}) \in \SP_{n,r}$ 
(and for arbitrary $\Bla \in \SP_{n,r}$) that $K^{-}_{\Bla,\Bmu}(t)$ has 
a Lasacoux-Sch\"utzenberger type combinatorial description.      

\par

\par\bigskip

\section{Review on Kostka functions}

\para{1.1.}
First we recall basic properties of Hall-Littlewood functions and Kostka polynomials
in the original setting, following [M]. 
Let $\vL = \vL(y) = \bigoplus_{n \ge 0}\vL^n$ be the ring of symmetric functions 
over $\BZ$ with respect to the variables $y = (y_1, y_2, \dots)$, where $\vL^n$ denotes the 
free $\BZ$-module of symmetirc functions of degree $n$. 
We put $\vL_{\BQ} = \BQ\otimes_{\BZ}\vL$, $\vL^n_{\BQ} = \BQ\otimes_{\BZ}\vL^n$. 
Let $s_{\la}$ be the Schur function associated to $\la \in \SP_n$.  Then 
$\{ s_{\la} \mid \la \in \SP_n \}$ gives a $\BZ$-baisis of $\vL^n$. 
Let $p_{\la} \in \vL^n$ be the power sum symmetric function associated to 
$\la \in  \SP_n$,
\begin{equation*}
p_{\la} = \prod_{i = 1}^kp_{\la_i},
\end{equation*}
where $p_m$ denotes the $m$-th power sum symmetric function for each integer $m > 0$. 
Then $\{ p_{\la} \mid \la \in \SP_n \}$ gives a $\BQ$-basis of 
$\vL^n_{\BQ}$. 
For $\la = (1^{m_1}, 2^{m_2}, \dots) \in \SP_n$, define an integer $z_{\la}$ by 
\begin{equation*}
\tag{1.1.1}
z_{\la} = \prod_{i \ge 1}i^{m_i}m_i!.
\end{equation*} 
Following [M, I], we introduce a scalar product on $\vL_{\BQ}$ by 
$\lp p_{\la}, p_{\mu} \rp = \d_{\la\mu}z_{\la}$.  
It is known that $\{s_{\la}\}$ form  an orthonormal basis of $\vL$. 

\par
Let $P_{\la}(y;t)$ be the Hall-Littlewood function associated to a partition $\la$.
Then $\{ P_{\la} \mid \la \in \SP_n \}$ gives a $\BZ[t]$-basis of 
$\vL^n[t] = \BZ[t]\otimes_{\BZ}\vL^n$, where $t$ is an indeterminate.
Kostka polynomials $K_{\la, \mu}(t) \in \BZ[t]$ ($\la, \mu \in \SP_n$) are defined by the formula
\begin{equation*}
\tag{1.1.2}
s_{\la}(y) = \sum_{\mu \in \CP_n}K_{\la,\mu}(t)P_{\mu}(y;t).
\end{equation*}  
\par
Recall the dominance order $\la \ge \mu$ in $\SP_n$, which is defined by the condition 
$\sum_{j= 1}^i\la_j \ge \sum_{j = 1}^i\mu_j$ for 
each $i \ge 1$. 
For each partition $\la = (\la_1, \dots, \la_k)$, we define an integer 
$n(\la)$ by $n(\la) = \sum_{i=1}^k(i-1)\la_i$. 
It is known that $K_{\la,\mu}(t) = 0$ unless $\la \ge \mu$, and that 
$K_{\la,\mu}(t)$ is a monic of degree $n(\mu) - n(\la)$ if $\la \ge \mu$
([M, III, (6.5)]).  
Put $\wt K_{\la,\mu}(t) = t^{n(\mu)}K_{\la, \mu}(t\iv)$.  Then 
$\wt K_{\la, \mu}(t) \in \BZ[t]$, which we call the modified Kostka polynomial.  
\par
For $\la = (\la_1, \dots, \la_k) \in \SP_n$ with $\la_k > 0$, 
we define $z_{\la}(t) \in \BQ(t)$ by 
\begin{equation*}
\tag{1.1.3}
z_{\la}(t) = z_{\la}\prod_{i \ge 1}(1 - t^{\la_i})\iv,
\end{equation*}
where $z_{\la}$ is as in (1.1.1). 
Following [M, III], we introduce a scalar product on $\vL_{\BQ}(t) = \BQ(t)\otimes_{\BZ}\vL$ by 
$\lp p_{\la}, p_{\mu} \rp = z_{\la}(t)\d_{\la,\mu}$.
Then $P_{\la}(y;t)$ form an orthogonal basis of $\vL[t] = \BZ[t]\otimes_{\BZ}\vL$.
In fact, they  are characterized by the following two properties
([M, III, (2.6) and (4.9)]);
\begin{equation*}
\tag{1.1.4}
P_{\la}(y;t) = s_{\la}(y) + \sum_{\mu < \la}w_{\la\mu}(t)s_{\mu}(y)
\end{equation*}
with $w_{\la\mu}(t) \in \BZ[t]$ , and
\begin{equation*}
\tag{1.1.5}
\lp P_{\la}, P_{\mu} \rp = 0 \text{ unless $\la = \mu$. } 
\end{equation*}

\para{1.2.}
We fix a positive integer $r$. 
Let  
$\Xi = \Xi(x) \simeq \vL(x^{(1)})\otimes\cdots\otimes\vL(x^{(r)})$ be 
the ring of symmetric functions over $\BZ$ 
with respect to variables $x = (x^{(1)}, \dots, x^{(r)})$, where 
$x^{(i)} = (x^{(i)}_1, x^{(i)}_2, \dots)$.
We denote it as $\Xi = \bigoplus_{n \ge 0}\Xi^n$, similarly to the case of $\vL$. 
  Let $\SP_{n,r}$ be as in Introduction. 
For $\Bla  \in \SP_{n,r}$, we define a Schur function
$s_{\Bla}(x) \in \Xi^n$ by 
\begin{equation*}
\tag{1.2.1}
s_{\Bla}(x) = s_{\la^{(1)}}(x^{(1)})\cdots s_{\la^{(r)}}(x^{(r)}).
\end{equation*}
Then 
$\{ s_{\Bla} \mid \Bla \in \SP_{n,r} \}$ gives a $\BZ$-basis of $\Xi^n$. 
Let $\z$ be a primitive $r$-th root of unity in $\BC$. 
For an integer $m \ge 1$ and $k$ such that $1 \le k \le r$, put 
\begin{equation*}
p_m^{(k)}(x) = \sum_{j = 1}^{r}\z^{(k-1)(j-1)}p_m(x^{(j)}),
\end{equation*}
where $p_m(x^{(j)})$ denotes the $m$-th power sum symmetric function with respect to 
the variables $x^{(j)}$. 
For $\Bla \in \SP_{n,r}$, we define $p_{\Bla}(x) \in \Xi^n_{\BC} = \Xi^n \otimes_{\BZ}\BC$ by 
\begin{equation*}
\tag{1.2.2}
p_{\Bla}(x) = \prod_{k = 1}^r\prod_{j= 1}^{m_k}p^{(k)}_{\la^{(k)}_j}(x),
\end{equation*}  
where $\la^{(k)} = (\la^{(k)}_1, \dots, \la^{(k)}_{m_k})$ with $\la^{(k)}_{m_k} > 0$. 
Then $\{ p_{\Bla} \mid \Bla \in \SP_{n,r} \}$ gives a $\BC$-basis of 
$\Xi^n_{\BC}$. 
For a partition $\la^{(k)}$ as above, 
we define a function 
$z_{\la^{(k)}}(t) \in \BC(t)$ by  
\begin{equation*}
z_{\la^{(k)}}(t) = \prod_{j = 1}^{m_k}(1 - \z^{k-1}t^{\la_j^{(k)}})\iv.
\end{equation*}
For $\Bla \in \SP_{n,r}$, 
we define an integer $z_{\Bla}$ by $z_{\Bla} = \prod_{k=1}^rr^{m_k}z_{\la^{(k)}}$, 
wher $z_{\la^{(k)}}$ is as in (1.1.1).  
We now define a function $z_{\Bla}(t) \in \BC(t)$ by 
\begin{equation*}
\tag{1.2.3}
z_{\Bla}(t) = z_{\Bla}\prod_{k=1}^rz_{{\la}^{(k)}}(t).
\end{equation*}
Let $\Xi[t] = \BZ[t]\otimes_{\BZ}\Xi$ be the free $\BZ[t]$-module, 
and  $\Xi_{\BC}(t) = \BC(t)\otimes_{\BZ}\Xi$ be the $\BC(t)$-space. 
Then $\{ p_{\Bla}(x) \mid \Bla \in \SP_{n,r} \}$ gives a basis of $\Xi^n_{\BC}(t)$.
We define a sesquilinear form  on $\Xi_{\BC}(t)$ by 
\begin{equation*}
\tag{1.2.4}
\lp p_{\Bla}, p_{\Bmu} \rp = \d_{\Bla,\Bmu}z_{\Bla}(t).
\end{equation*} 
\par
We express an $r$-partition $\Bla = (\la^{(1)}, \dots, \la^{(r)})$ as 
$\la^{(k)} = (\la^{(k)}_1, \dots, \la^{(k)}_m)$ with a common  $m$, 
by allowing zero on parts $\la^{(i)}_j$, and define a composition 
$c(\Bla)$ of $n$ by 
\begin{equation*}
c(\Bla) = (\la^{(1)}_1, \dots, \la^{(r)}_1, \la^{(1)}_2, \dots, \la^{(r)}_2, 
              \dots, \la^{(1)}_m, \dots, \la^{(r)}_m).
\end{equation*} 
We define a partial order $\Bla \ge \Bmu$ on $\SP_{n,r}$ by the condition 
$c(\Bla) \ge c(\Bmu)$, where $\ge $ is the dominance order on the set of 
compositions of $n$ defined in a similar way as in the case of partitions. 
We fix a total order $\Bla \gv \Bmu$ on $\SP_{n,r}$ compatible with the 
partial order $\Bla > \Bmu$. 
\par
The following result was proved in Theorem 4.4 and Proposition 4.8 in [S1], 
combined with [S2, \S 3].

\begin{prop}  
For each $\Bla \in \SP_{n,r}$, there exist unique functions 
$P^{\pm}_{\Bla}(x;t) \in \Xi^n_{\BQ}(t)$ $($depending on the signs $+$, $-$ $)$
satisfying the following properties.
\begin{enumerate}
\item 
$P^{\pm}_{\Bla}(x;t)$ can be written as 
\begin{equation*}
P^{\pm}_{\Bla}(x;t) = s_{\Bla}(x) + \sum_{\Bmu \lv \Bla}u^{\pm}_{\Bla,\Bmu}(t)s_{\Bmu}(x)
\end{equation*}  
with $u^{\pm}_{\Bla,\Bmu}(t) \in \BQ(t)$. 
\item
$\lp P_{\Bla}^-, P^+_{\Bmu} \rp = 0$ unless $\Bla = \Bmu$.  
\end{enumerate}
\end{prop}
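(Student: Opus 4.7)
The plan is to prove Proposition 1.3 by a biorthogonalization (Gram–Schmidt type) construction with respect to the fixed total order $\gv$ on $\SP_{n,r}$. Enumerate $\SP_{n,r} = \{\Bla_1 \lv \Bla_2 \lv \cdots \lv \Bla_N\}$ and let $W_k$ be the $\BC(t)$-span of $\{s_{\Bla_1}, \dots, s_{\Bla_k}\}$, so that $W_1 \subset W_2 \subset \cdots \subset W_N = \Xi^n_{\BC}(t)$ is an exhaustive flag. First I would record that, since $\{p_{\Bla}\}$ is a $\BC(t)$-basis of $\Xi^n_{\BC}(t)$ and $\lp p_{\Bla}, p_{\Bmu}\rp = \d_{\Bla,\Bmu}z_{\Bla}(t)$ with each $z_{\Bla}(t)$ a nonzero element of $\BC(t)$, the sesquilinear form $\lp\,,\rp$ is nondegenerate on $\Xi^n_{\BC}(t)$.

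The construction proceeds by induction on $k$. For $k = 1$, set $P^\pm_{\Bla_1} = s_{\Bla_1}$, for which (i) is vacuous and (ii) holds trivially. For the inductive step, assume $P^\pm_{\Bmu}$ have been constructed for all $\Bmu \lv \Bla_k$, that they satisfy (i) and (ii) among themselves, and moreover that the diagonal pairings $\lp P^-_{\Bmu}, P^+_{\Bmu}\rp$ are all nonzero. By the triangularity (i), each family $\{P^-_{\Bmu}\}_{\Bmu \lv \Bla_k}$ and $\{P^+_{\Bmu}\}_{\Bmu \lv \Bla_k}$ is a basis of $W_{k-1}$, so one sets
\[
P^-_{\Bla_k} \;=\; s_{\Bla_k} \;-\; \sum_{j < k}\frac{\lp s_{\Bla_k},\, P^+_{\Bla_j}\rp}{\lp P^-_{\Bla_j},\, P^+_{\Bla_j}\rp}\,P^-_{\Bla_j},
\]
and analogously $P^+_{\Bla_k}$ (with the roles of $-$ and $+$ swapped and the appropriate sesquilinear conjugation on the coefficients). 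Using the inductive biorthogonality, $\lp P^-_{\Bla_k}, P^+_{\Bla_j}\rp = 0$ for $j < k$ by a direct check, and (i) holds because $P^-_{\Bla_j} \in W_{j} \subset W_{k-1}$. Uniqueness of $P^-_{\Bla_k}$ is equivalent to the statement that the restriction of $\lp\,,\rp$ to $W_{k-1}$ is nondegenerate: any two candidates would differ by an element of $W_{k-1}$ orthogonal to $W_{k-1}$ on one side.

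The main obstacle, and the crux of the whole argument, is to verify the nondegeneracy statement that keeps the induction alive, namely $\lp P^-_{\Bla_k}, P^+_{\Bla_k}\rp \neq 0$ (equivalently, the Gram matrix of $\{s_{\Bla_1},\dots,s_{\Bla_k}\}$ has nonzero determinant for each $k$). Here one cannot merely invoke global nondegeneracy, because a sesquilinear form can degenerate on a subspace even when it is nondegenerate on the whole space. Instead I would expand $s_{\Bla} = \sum_{\Bmu} X_{\Bla,\Bmu}\, p_{\Bmu}$ and compute the determinant of the Gram matrix on $W_k$ as
\[
\det\bigl(\lp s_{\Bla_i}, s_{\Bla_j}\rp\bigr)_{i,j \le k} \;=\; \sum_{I} \Bigl|\det\bigl(X_{\Bla_i, \Bnu}\bigr)_{i \le k,\ \Bnu \in I}\Bigr|^2 \prod_{\Bnu \in I} z_{\Bnu}(t),
\]
summed over $k$-subsets $I$ of $\SP_{n,r}$; because the $X$-matrix is invertible (change between two bases) and each $z_{\Bnu}(t)$ is a nonvanishing rational function with a very specific structure of poles and zeros dictated by (1.2.3), the sum is a nonzero element of $\BC(t)$. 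This is the computation done in [S1, Theorem 4.4 and Proposition 4.8] together with [S2, \S 3], and closes both the existence induction and the uniqueness argument.
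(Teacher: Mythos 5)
The paper itself does not prove Proposition 1.3: it cites [S1, Thm.~4.4, Prop.~4.8] together with [S2, \S 3], so there is no in-text proof to compare with. Your plan --- Gram--Schmidt biorthogonalization along the chosen total order --- is the standard construction and correctly reduces both existence and uniqueness to showing that the sesquilinear form stays nondegenerate on each $W_k$, i.e.\ that every leading principal minor of the Gram matrix $\bigl(\lp s_{\Bla_i}, s_{\Bla_j}\rp\bigr)_{i,j\le k}$ is nonzero in $\BC(t)$.

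The gap is in the justification of that nondegeneracy. After the Cauchy--Binet expansion you conclude that the sum is nonzero ``because the $X$-matrix is invertible and each $z_{\Bnu}(t)$ is a nonvanishing rational function with a very specific structure of poles and zeros.'' Neither reason suffices. Invertibility of $X$ only guarantees that \emph{some} $k\times k$ minor $\det(X_{[k],I})$ is nonzero, and nonvanishing of the individual $z_{\Bnu}(t)$ does not preclude the resulting signed/complex combination from cancelling; a toy example in exactly this shape (take $X$ the $2\times 2$ Hadamard matrix and diagonal entries $1,-1$) has invertible $X$ and all diagonal entries nonzero, yet the leading $1\times 1$ Gram minor vanishes. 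What actually closes the argument is the specialization at $t=0$: from (1.2.3) one has $z_{\la^{(k)}}(0)=1$, hence $z_{\Bla}(0)=z_{\Bla}>0$, and at $t=0$ the form (1.2.4) reduces to the standard positive-definite Hermitian pairing on $\Xi^n_{\BC}$ for which the $s_{\Bla}$ are orthonormal --- this is precisely what the normalization $z_{\Bla}=\prod_k r^{m_k}z_{\la^{(k)}}$ and the twisted power sums $p^{(k)}_m$ are designed to achieve. Therefore each leading Gram minor equals $1$ at $t=0$ and is a nonzero element of $\BC(t)$, which is what keeps your induction alive. You should also state explicitly what ``sesquilinear'' means for (1.2.4) --- conjugation on $\BC$-scalars with $t$ fixed --- since the factor $|\det(X_{[k],I})|^2$ in your Cauchy--Binet identity silently relies on the conjugation acting as genuine complex conjugation on the $\BC$-valued transition matrix $X$.
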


\para{1.4.}
$P^{\pm}_{\Bla}(x;t)$ are called Hall-Littlewood functions associated to 
$\Bla \in \SP_{n,r}$.
By Proposition 1.3, for $\ve \in \{ +,-\}$, $\{ P^{\ve}_{\Bla} \mid \Bla \in \SP_{n,r}\}$ 
gives 
a $\BQ(t)$-basis for $\Xi_{\BQ}(t)$.  
For $\Bla, \Bmu \in \SP_{n,r}$, we define functions $K^{\pm}_{\Bla, \Bmu}(t) \in \BQ(t)$ by 

\begin{equation*}
\tag{1.4.1}
s_{\Bla}(x) = \sum_{\Bmu \in \SP_{n,r}}K^{\pm}_{\Bla,\Bmu}(t)P^{\pm}_{\Bmu}(x;t).
\end{equation*} 

\par
$K^{\pm}_{\Bla, \Bmu}(t)$ are called Kostka functions associated to complex 
reflection groups since they are closely related to the complex reflection group
$S_n\ltimes (\BZ/r\BZ)^n$ (see [S1, Theorem 5,4]).  
For each $\Bla \in \SP_{n,r}$, by putting 
$n(\Bla) = n(\la^{(1)}) + \cdots + n(\la^{(r)})$, we define an $a$-function 
$a(\Bla)$ on $\SP_{n,r}$ by 
\begin{equation*}
\tag{1.4.2}
a(\Bla) = r\cdot n(\Bla) + |\la^{(2)}| + 2|\la^{(3)}| + \cdots + (r-1)|\la^{(r)}|.
\end{equation*}
We define modifed Kostka functions 
$\wt K^{\pm}_{\Bla, \Bmu}(t)$ by 
\begin{equation*}
\tag{1.4.3}
\wt K^{\pm}_{\Bla, \Bmu}(t) = t^{a(\Bmu)}K^{\pm}_{\Bla, \Bmu}(t\iv).
\end{equation*}   
 
\remark{1.5.}
In the case where $r = 1$, $P^{\pm}_{\Bla}(x;t)$ coincides with  the original 
Hall-Littlewood function given in 1.1.  In the case where $r = 2$, 
it is proved by [S2, Prop. 3.3] that $P^-_{\Bla}(x;t) = P^+_{\Bla}(x;t) \in \Xi[t]$, 
hence $K^-_{\Bla,\Bmu}(t) = K^+_{\Bla, \Bmu}(t) \in \BZ[t]$.
Moreover it is shown that $K^{\pm}_{\Bla, \Bmu}(t) \in \BZ[t]$, 
which is a monic of degree $a(\Bmu) - a(\Bla)$.  Thus $\wt K^{\pm}_{\Bla, \Bmu}(t) \in \BZ[t]$.      
As mentioned in Introduction $\wt K^{\pm}_{\Bla, \Bmu}(t)$ has a geometric 
interpretation, which imples that $K^{\pm}_{\Bla, \Bmu}(t)$, and so  
$P^{\pm}_{\Bla}(x;t)$ are independent of the choice of the total order $\lv$ on $\SP_{n,r}$.
In the case where $r \ge 3$, it is not known whether Hall-Littlewood functions do not
depend on the choice of the total order $\lv$, 
whether $K^{\pm}_{\Bla, \Bmu}(t)$ are polynomials in $t$.  

\par\bigskip

\section{Enhanced variety of level $r$ }

\para{2.1.}
Let $V$ be an $n$-dimensional vector space over an algebraic closure $\Bk$ of a finite 
field $\Fq$, 
and $G = GL(V) \simeq GL_n$. 
Let $B = TU$ be a Borel subgroup of $G$, $T$ a maximal torus and $U$ the unipotent radical 
of $B$.  Let $W = N_G(T)/T$ be the Weyl group of $G$, which is isomorphic to the symmetric 
group $S_n$.
By fixing an integer $r \ge 1$, put $\SX = G \times V^{r-1}$
and $\SX\uni = G\uni \times V^{r-1}$, where $G\uni$ is the set of unipotent elements in $G$. 
The variety $\SX$ is called the enhanced variety of level $r$. 
We consider the diagonal action of $G$ on $\SX$.  
Put $\SQ_{n,r} = \{ \Bm = (m_1, \dots, m_r) \in \BZ^r_{\ge 0} \mid \sum m_i = n\}$.
For each $\Bm \in \SQ_{n,r}$, 
we define integers $p_i = p_i(\Bm)$ by $p_i = m_1 + \cdots + m_i$ for $i = 1, \dots, r$.
  Let $(M_i)_{1 \le i \le n}$ be the total flag in $V$ whose stabilizer in $G$ 
coincides with $B$.  We define varieties 

\begin{align*}
\wt\SX_{\Bm} &= \{ (x, \Bv, gB) \in G \times V^{r-1} \times G/B \mid g\iv xg \in B, 
                      g\iv \Bv \in \prod_{i=1}^{r-1}M_{p_i} \}, \\
\SX_{\Bm} &=  \bigcup_{g \in G}g(B \times \prod_{i=1}^{r-1}M_{p_i}),
\end{align*}
and the map $\pi_{\Bm} : \wt \SX_{\Bm} \to \SX_{\Bm}$ by $(x,\Bv, gB) \mapsto (x,\Bv)$.
We also define the varieties

\begin{align*}
\wt\SX_{\Bm, \unip} &= \{ (x, \Bv, gB) \in G\uni \times V^{r-1} \times G/B \mid g\iv xg \in U, 
                      g\iv \Bv \in \prod_{i=1}^{r-1}M_{p_i} \}, \\
\SX_{\Bm} &=  \bigcup_{g \in G}g(U \times \prod_{i=1}^{r-1}M_{p_i}),
\end{align*}
and the map $\pi_{\Bm,1}: \wt\SX_{\Bm,\unip} \to \SX_{\Bm,\unip}$, similarly. 
Note that in the case where $\Bm = (n,0,\dots, 0)$, $\SX_{\Bm}$ (resp. $\SX_{\Bm,\unip}$)
coincides with $\SX$ (resp. $\SX\uni$).  
In that case, we denote $\wt\SX_{\Bm}, \pi_{\Bm}$, etc. 
by $\wt\SX, \pi$, etc.  by omitting the symbol $\Bm$.
(Note: here we follow the notation in [S4], but, in part, it differs from [S3]. 
In [S3], our $\pi_{\Bm}, \pi_{\Bm,1}$ are denoted by $\pi^{(\Bm)}, \pi^{(\Bm)}_1$ 
for the consistency with the exotic case). 

\para{2.2.}
In [S3, 5.3], a partition of $\SX\uni$ into pieces $X_{\Bla}$ is defined

\begin{equation*}
\SX\uni = \coprod_{\Bla \in \SP_{n,r}}X_{\Bla},
\end{equation*}
where $X_{\Bla}$ is a locally closed, smooth irreducible, $G$-stable subvariety
of $\SX\uni$.  If $r = 1$ or 2, $X_{\Bla}$ is a single $G$-orbit. However, if 
$r \ge 3$, $X_{\Bla}$ is in general a union of infinitely many $G$-orbits. 
\par
For $\Bm \in \SQ_{n,r}$, let $W_{\Bm} = S_{m_1} \times \cdots \times S_{m_r}$ 
be the Young subgroup of $W = S_n$.
For $\Bm \in \SQ_{n,r}$, we denote by $\SP(\Bm)$ the set of $\Bla \in \SP_{n,r}$ 
such that $|\la^{(i)}| = m_i$.
The (isomorphism classes of) 
irreducible representations (over $\Ql$) of $W_{\Bm}$ are parametrized by $\SP(\Bm)$.
We denote by $V_{\Bla}$ an irreducible representation of $W_{\Bm}$ corresponding to 
$\Bla$, namley $V_{\Bla} = V_{\la^{(1)}}\otimes\cdots\otimes V_{\la^{(r)}}$,
where $V_{\mu}$ denotes the irreducible representation of $S_n$ corresponding to 
the partition $\mu$ of $n$.  (Here we use the parametrization such that $V_{(n)}$ is the 
trivial representation of $S_n$).
The following results were proved in [S3]. 

\begin{thm}[{[S3, Thm. 4.5]}]  
Put $d_{\Bm} = \dim \SX_{\Bm}$.  Then $(\pi_{\Bm})_*\Ql[d_{\Bm}]$ is a
semsimple perverse sheaf equipped with the action of $W_{\Bm}$, and is decomposed as

\begin{equation*}
(\pi_{\Bm})_*\Ql[d_{\Bm}] \simeq \bigoplus_{\Bla \in \SP(\Bm)}
         V_{\Bla} \otimes \IC(\SX_{\Bm}, \SL_{\Bla})[d_{\Bm}],
\end{equation*} 
where $\SL_{\Bla}$ is a simple local system on a certain open dense subvariety of $\SX_{\Bm}$.
\end{thm}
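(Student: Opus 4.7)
The plan is to mimic Springer's original argument for the Grothendieck--Springer resolution, adapted to the enhanced setting. First I would observe that $\wt\SX_{\Bm}$ is isomorphic to the $G$-equivariant fibre bundle $G \times^B (B \times \prod_{i=1}^{r-1} M_{p_i})$ over $G/B$, so that $\pi_{\Bm}$ is the restriction to a closed subvariety of the proper projection $\SX_{\Bm} \times G/B \to \SX_{\Bm}$, and is therefore proper. A direct count shows $\dim \wt\SX_{\Bm} = \dim \SX_{\Bm} = d_{\Bm}$, consistent with the expectation that $\pi_{\Bm}$ is generically finite.

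Next I would identify the generic fibre. Let $U \subset \SX_{\Bm}$ be the open dense locus of pairs $(x,\Bv)$ where $x$ is regular semisimple with $n$ pairwise distinct eigenvalues and $\Bv = (v_1, \dots, v_{r-1})$ is in sufficiently general position with respect to the eigenline decomposition $V = L_1 \oplus \cdots \oplus L_n$. A point of $\pi_{\Bm}\iv(x,\Bv)$ is then an ordering $(L_{\s(1)}, \dots, L_{\s(n)})$ of the eigenlines such that $v_i \in L_{\s(1)} \oplus \cdots \oplus L_{\s(p_i)}$ for $1 \le i \le r-1$. For generic $\Bv$, these conditions pin down the unordered subset of $m_1$ lines supporting $v_1$, then the $m_2$ additional lines required to support $v_2$, and so on, leaving exactly $m_1!\,m_2!\cdots m_r! = |W_{\Bm}|$ admissible orderings, with $W_{\Bm}$ acting simply transitively by permuting eigenlines within each block. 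Hence $\pi_{\Bm}\iv(U) \to U$ is a finite étale $W_{\Bm}$-torsor, and the restriction of $(\pi_{\Bm})_*\Ql[d_{\Bm}]$ to $U$ is a shifted local system $\CL$ whose geometric fibres carry the regular representation of $W_{\Bm}$.

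The main step is to prove that $\pi_{\Bm}$ is a \emph{small} map: for every integer $k \ge 1$, the closed locus $\{(x,\Bv) \in \SX_{\Bm} : \dim \pi_{\Bm}\iv(x,\Bv) \ge k\}$ has codimension strictly greater than $2k$ in $\SX_{\Bm}$. To verify this I would stratify $\SX_{\Bm}$ by the Jordan type of $x$ together with combinatorial data recording the position of $\Bv$ in the generalized eigenspace decomposition. On each stratum, the fibre of $\pi_{\Bm}$ becomes a variety of partial flags subject to incidence conditions with respect to $\Bv$, whose dimension can be read off from the stratum data. The codimension-versus-fibre-dimension balance is modelled on the classical Grothendieck--Springer computation, but requires genuinely new bookkeeping because of the extra $V^{r-1}$ factor and the constraints imposed by the partial flag $(M_{p_i})_{i=1}^{r-1}$. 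I expect this smallness estimate to be the principal obstacle: it drives the structure theorem and is especially delicate for $r \ge 3$, where even $\SX\uni$ already contains infinitely many $G$-orbits.

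Once smallness is established, the standard IC-extension criterion yields $(\pi_{\Bm})_*\Ql[d_{\Bm}] \simeq \IC(\SX_{\Bm}, \CL)$. Decomposing $\CL$ as a $W_{\Bm}$-equivariant local system into isotypic components $\CL \simeq \bigoplus_{\Bla \in \SP(\Bm)} V_{\Bla} \otimes \SL_{\Bla}$, where $\SL_{\Bla}$ is the simple local system on $U$ attached to the irreducible $W_{\Bm}$-module $V_{\Bla}$, and applying $\IC(\SX_{\Bm}, -)$ term by term, yields the stated decomposition. The $W_{\Bm}$-action on $(\pi_{\Bm})_*\Ql[d_{\Bm}]$ is the one induced from the deck transformations on $\pi_{\Bm}\iv(U)$, acting through the multiplicity spaces $V_{\Bla}$.
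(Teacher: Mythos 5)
The paper itself does not prove this statement; it is quoted from [S3, Thm.~4.5], so there is no in-text argument to compare against. Evaluating your proposal on its own merits: the general strategy (identify a $W_{\Bm}$-torsor over an open dense locus, prove smallness, and invoke the decomposition theorem plus the IC-extension criterion) is the standard Springer-theoretic route and is the correct framework for a result of this shape. Your identification of the generic fibre is also essentially right: over the locus where $x$ is regular semisimple with eigenline decomposition $V = L_1 \oplus \cdots \oplus L_n$, a point of $\pi_{\Bm}^{-1}(x,\Bv)$ is an ordering of the $L_j$ such that $v_i$ lies in the span of the first $p_i$ lines; for $(x,\Bv) \in \SX_{\Bm}$ in general position the partial flag of supports is uniquely determined, leaving exactly $|W_{\Bm}|$ admissible orderings on which $W_{\Bm}$ acts simply transitively by permuting within blocks. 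This gives a $W_{\Bm}$-Galois covering over an open dense $U \subset \SX_{\Bm}$, hence the regular local system $\CL = \bigoplus_{\Bla} V_{\Bla} \otimes \SL_{\Bla}$.

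The genuine gap is the smallness estimate, which you yourself flag as ``the principal obstacle'' and then do not carry out. Without it, the decomposition theorem only tells you that $(\pi_{\Bm})_*\Ql[d_{\Bm}]$ is a direct sum of shifted simple perverse sheaves; it does not tell you that \emph{every} summand has full support $\SX_{\Bm}$, which is precisely what the stated isomorphism asserts. The danger for $r \ge 3$ is real: the piecewise stratification of $\SX_{\Bm}$ by the type of $x$ and the incidence of $\Bv$ with the eigenspace flag has infinitely many strata already at the unipotent level, so the codimension-versus-fibre-dimension bookkeeping is not a routine transcription of the classical Grothendieck--Springer computation. A correct proof must either (a) establish the strict inequality $\dim\{z : \dim\pi_{\Bm}^{-1}(z) \ge k\} < d_{\Bm} - 2k$ for all $k \ge 1$ by a concrete dimension count over a suitable stratification, or (b) replace smallness by a different mechanism for controlling supports (for instance, a reduction to a Levi via cartesian diagrams of the kind used in 2.10, or an inductive comparison with the unipotent version in Theorem 2.4). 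As written, your argument assumes the hard part. The remaining ingredients --- properness of $\pi_{\Bm}$, the dimension count $\dim \wt\SX_{\Bm} = \dim G + \sum_{i=1}^{r-1} p_i$, and the passage from $\CL$ to $\bigoplus_{\Bla} V_{\Bla} \otimes \IC(\SX_{\Bm}, \SL_{\Bla})$ --- are all fine once smallness is in hand.
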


\begin{thm}[{[S3, Thm. 8.13, Thm. 7.12]}] 
Put $d'_{\Bm} = \dim \SX_{\Bm,\unip}$.  
\begin{enumerate}
\item
$(\pi_{\Bm,1})_*\Ql[d'_{\Bm}]$ 
is a semisimple perverse sheaf equipped with the action of $W_{\Bm}$, and 
is decomposed as 

\begin{equation*}
(\pi_{\Bm,1})_*\Ql[d'_{\Bm}] \simeq \bigoplus_{\Bla \in \SP(\Bm)}
                  V_{\Bla} \otimes \IC(\ol X_{\Bla}, \Ql)[\dim  X_{\Bla}].
\end{equation*} 
\item
We have 
$\IC(\SX_{\Bm}, \SL_{\la})|_{\SX_{\Bm, \unip}} \simeq 
           \IC(\ol X_{\Bla}, \Ql)[\dim X_{\Bla} - d'_{\Bm}]$. 
\end{enumerate}
\end{thm}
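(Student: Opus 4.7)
The strategy is to derive both parts from Theorem 2.3 by restricting to the unipotent locus, and then to identify each IC summand on $\SX_{\Bm,\unip}$. Since $\wt\SX_{\Bm,\unip}$ is by definition the preimage of $\SX_{\Bm,\unip}$ under $\pi_{\Bm}$, the square
\[
\begin{CD}
\wt\SX_{\Bm,\unip} @>>> \wt\SX_{\Bm} \\
@VV\pi_{\Bm,1}V @VV\pi_{\Bm}V \\
\SX_{\Bm,\unip} @>>> \SX_{\Bm}
\end{CD}
\]
is Cartesian with $\pi_{\Bm}$ proper, and proper base change yields $(\pi_{\Bm,1})_*\Ql \simeq ((\pi_{\Bm})_*\Ql)|_{\SX_{\Bm,\unip}}$. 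Combining with Theorem 2.3 gives
\[
(\pi_{\Bm,1})_*\Ql[d_{\Bm}] \simeq \bigoplus_{\Bla \in \SP(\Bm)} V_{\Bla} \otimes \bigl(\IC(\SX_{\Bm}, \SL_{\Bla})|_{\SX_{\Bm,\unip}}\bigr)[d_{\Bm}],
\]
with the $W_{\Bm}$-action inherited from the source. Granting (ii) and shifting both sides by $d'_{\Bm} - d_{\Bm}$ then reproduces the decomposition in (i), so the task reduces to proving (ii).

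For (ii), I need to show that the restriction $\IC(\SX_{\Bm}, \SL_{\Bla})|_{\SX_{\Bm,\unip}}$, which a priori is merely a complex of sheaves on $\SX_{\Bm,\unip}$, is actually the IC extension of a local system on an open dense subset of $X_{\Bla}$, with total support $\ol X_{\Bla}$ and shifted exactly so that $\IC(\SX_{\Bm}, \SL_{\Bla})|_{\SX_{\Bm,\unip}}[d'_{\Bm}]$ becomes the perverse sheaf $\IC(\ol X_{\Bla}, \Ql)[\dim X_{\Bla}]$. The plan is to exploit the partition $\SX\uni = \coprod X_{\Bla}$ of \textup{[S3, 5.3]}, whose pieces $X_{\Bla}$ are $G$-stable, smooth, irreducible, and designed to be compatible with the map $\pi_{\Bm,1}$. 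Fixing a well-chosen point $(x,\Bv) \in X_{\Bla}$, I would analyze the fiber $\pi_{\Bm,1}\iv(x,\Bv)$ in terms of Springer-type fibers attached to a Levi subgroup isomorphic to $GL_{m_1}\times\cdots\times GL_{m_r}$, and then show that the induced $W_{\Bm}$-representation on the top cohomology of this fiber equals $V_{\Bla}$, pinning down the bijection of summands.

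The principal obstacle is that restriction of an IC sheaf to a closed subvariety is not in general an IC sheaf, so some genuine geometric input is required for the restriction to remain perverse after the shift $-(d_{\Bm} - d'_{\Bm})$. Two promising approaches are: first, to identify $\SX_{\Bm,\unip}$ as the attracting set (or fixed locus) of a suitable $\GG_m$-action on $\SX_{\Bm}$ and invoke hyperbolic localization, which automatically preserves perversity and produces the correct shift; second, to use a transverse slice argument comparing the local structure of $\SX_{\Bm}$ near a generic point of $X_{\Bla}$ with that of a classical Springer fiber for a smaller group, reducing to the case $r=1$ where the analogous statement is standard. In either route, the core difficulty lies in matching the combinatorial label $\Bla$ attached to the piece $X_{\Bla}$ with the one attached to the open dense stratum of $\SX_{\Bm}$ carrying $\SL_{\Bla}$; this matching is precisely what generalizes the Achar--Henderson picture from $r=2$ to arbitrary $r$, and where the construction of the pieces $X_{\Bla}$ in \textup{[S3]} plays its essential role.
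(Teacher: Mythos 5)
The paper does not prove this theorem: it is quoted verbatim from \textup{[S3, Thm.~8.13, Thm.~7.12]}, so there is no in-paper argument to compare your proposal against.

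Evaluated on its own, your reduction of (i) to (ii) is correct. The square you draw is Cartesian (if $(x,\Bv,gB)\in\wt\SX_{\Bm}$ lies over a point with $x$ unipotent, then $g\iv xg$ is a unipotent element of $B$, hence lies in $U$, so the fibre of $\pi_{\Bm}$ over $\SX_{\Bm,\unip}$ is exactly $\wt\SX_{\Bm,\unip}$), $\pi_{\Bm}$ is proper because $\wt\SX_{\Bm}$ is closed in $G\times V^{r-1}\times G/B$ and $G/B$ is complete, so proper base change gives $(\pi_{\Bm,1})_*\Ql\simeq\bigl((\pi_{\Bm})_*\Ql\bigr)|_{\SX_{\Bm,\unip}}$. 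Substituting the decomposition of Theorem~2.3 and shifting by $d'_{\Bm}-d_{\Bm}$ recovers formula (i), with the $W_{\Bm}$-action inherited correctly; and once each restricted summand is known to be a shifted IC sheaf, semisimplicity and perversity of the total complex are automatic. All of that is fine.

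The gap is that you do not prove (ii), and (ii) is where the entire content of the theorem lives. You correctly observe that restriction of an IC complex to a closed subvariety is not generically an IC complex, and that some geometric input is needed to force perversity after the shift by $-(d_{\Bm}-d'_{\Bm})$; but then you only list two candidate mechanisms (hyperbolic localization for a hypothetical $\GG_m$-action with $\SX_{\Bm,\unip}$ as attracting locus, or a transverse-slice reduction to the classical $r=1$ case of Borho--MacPherson) without producing either the $\GG_m$-action, the slice, or the identification of which piece $X_{\Bla}$ and which local system $\SL_{\Bla}$ go together. You flag this matching yourself as ``the core difficulty,'' and it remains unaddressed. As written, the proposal is a plausible strategy outline with the central lemma left as a black box, not a proof. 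To close it you would need to actually establish (ii) --- for instance by proving that $\pi_{\Bm,1}$ is semi-small with relevant strata exactly the $X_{\Bla}$ and comparing the resulting decomposition of $(\pi_{\Bm,1})_*\Ql[d'_{\Bm}]$ against the restriction of Theorem~2.3, together with the inductive construction of the pieces $X_{\Bla}$ in \textup{[S3]} to match labels.
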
 
 
\para{2.5.}
For a partition $\la$, we denote by $\la^t$ the dual partition of $\la$.
For $\Bla = (\la^{(1)}, \dots, \la^{(r)}) \in \SP(\Bm)$, we define 
$\Bla^t \in \SP(\Bm)$ by 
$\Bla^t = ((\la^{(1)})^t, \dots, (\la^{(r)})^t)$.  
Assume that $\Bla  \in \CP(\Bm)$.
We write $(\la^{(i)})^t$  
as ($\mu^{(i)}_1 \le \mu^{(i)}_2 \le \cdots \le \mu^{(i)}_{\ell_i})$,
in the increasing order, where $\ell_i = \la^{(i)}_1$.   
For each $1 \le i \le r, 1\le j < \ell_i$, we define an integer $n(i,j)$ by 
\begin{equation*}
n(i,j) = (|\la^{(1)}| + \cdots + |\la^{(i-1)}|) + \mu_1^{(i)} + \cdots + \mu_j^{(i)}.
\end{equation*} 
Let $Q = Q_{\Bla}$ be the stabilizer of the partial flag $(M_{n(i,j)})$ in $G$, and $U_Q$ 
the unipotent radical of $Q$.
In particular, $Q$ stabilizes the subspaces $M_{p_i}$.  
Let us define a variety $\wt X_{\Bla}$ by 
\begin{equation*}
\begin{split}
\wt X_{\Bla} = \{ (x, \Bv, gQ) \in G\uni \times V^{r-1} \times G/Q
        \mid g\iv xg \in U_Q, g\iv \Bv \in \prod_{i=1}^{r-1}M_{p_i} \}.
\end{split}
\end{equation*}
We define a map $\pi_{\Bla} : \wt X_{\Bla} \to \CX\uni$ by 
$(x,\Bv, gQ) \mapsto (x,\Bv)$. Then $\pi_{\Bla}$ is a proper map. 
Since 
$\wt X_{\Bla} \simeq G\times^{Q}(U_{Q} 
          \times \prod_i M_{p_i})$, 
$\wt X_{\Bla}$ is smooth and irreducible. 
It is known by [S3, Lemma 5.6] that $\dim \wt X_{\Bla} = \dim X_{\la}$ 
and that $\Im \pi_{\Bla}$ coincides with $\ol X_{\Bla}$, the closure of 
$X_{\Bla}$ in $\SX\uni$. 
\par
For $\la, \mu \in \SP_n$, let $K_{\la, \mu} = K_{\la,\mu}(1)$ be the Kostka number. 
We have $K_{\la,\mu} = 0$ unless $\la \ge \mu$. 
For $\Bla = (\la^{(1)}, \dots, \la^{(r)})$, 
$\Bmu = (\mu^{(1)}, \dots, \mu^{(r)}) \in \SP(\Bm)$, we define an integer 
$K_{\Bla, \Bmu}$ by 

\begin{equation*}
K_{\Bla,\Bmu} = K_{\la^{(1)}, \mu^{(1)}}K_{\la^{(2)},\mu^{(2)}}
                   \cdots K_{\la^{(r)}, \mu^{(r)}}.
\end{equation*} 
We define a partial order $\Bla \trreq \Bmu$ on $\SP_{n,r}$ by the condition 
$\la^{(i)} \ge \mu^{(i)}$ for $i = 1, \dots, r$.  
Hence $\Bla \trreq \Bmu$ implies that $\Bla, \Bmu \in \SP(\Bm)$ for a commom $\Bm$.
We have $K_{\Bla, \Bmu} = 0$ 
unless $\Bla \trreq \Bmu$.   
Note that $\Bla \trreq \Bmu$ implies that $\Bmu^t \trreq \Bla^t$.  
We show the following theorem.  In the case where 
$r = 2$, this result was proved by [AH, Thm. 4.5]. 

\begin{thm}  
Assume that $\Bla \in \SP_{n,r}$.  Then $(\pi_{\Bla})_*\Ql[\dim X_{\Bla}]$ 
is a semisimple perverse sheaf on $\ol X_{\Bla}$, and is decomposed as 

\begin{equation*}
\tag{2.6.1}
(\pi_{\Bla})_*\Ql[\dim X_{\Bla}] \simeq \bigoplus_{\Bmu \trleq \Bla}
                    \Ql^{K_{\Bmu^t, \Bla^t}}\otimes \IC(\ol X_{\Bmu}, \Ql)[\dim X_{\Bmu}].
\end{equation*}
\end{thm}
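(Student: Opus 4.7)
The plan is to deduce Theorem 2.6 from Theorem 2.4 by comparing $\pi_\Bla$ with the Springer-like map $\pi_{\Bm,1}$ for the choice $\Bm = (|\la^{(1)}|, \dots, |\la^{(r)}|) \in \SQ_{n,r}$, for which $\Bla \in \SP(\Bm)$. Since the partial flag $(M_{n(i,j)})$ refines $(M_{p_i})$, the parabolic $Q = Q_\Bla$ is contained in $P = P_\Bm$, and the successive jumps of the refined flag are the parts $\mu^{(i)}_j = ((\la^{(i)})^t)_j$ of the dual partitions. Consequently, the Weyl group $W_Q$ sits inside $W_\Bm = \prod_i S_{|\la^{(i)}|}$ as a Young subgroup of type $\Bla^t$.

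My first step would be to introduce the intermediate variety
\[
\wh X_\Bla = \{(x,\Bv,gB) \in G\uni \times V^{r-1} \times G/B \mid g\iv xg \in U_Q,\ g\iv \Bv \in \textstyle\prod_{i=1}^{r-1}M_{p_i}\},
\]
which fits into a commutative diagram
\[
\begin{CD}
\wh X_\Bla @>f>> \wt X_\Bla \\
@V\iota VV @VV \pi_\Bla V \\
\wt\SX_{\Bm,\unip} @>\pi_{\Bm,1}>> \SX_{\Bm,\unip},
\end{CD}
\]
where $f:(x,\Bv,gB) \mapsto (x,\Bv,gQ)$ is smooth proper with fibers isomorphic to $Q/B$, and $\iota$ is the closed embedding arising from $U_Q \subset U$ (the bottom right target containing $\ol X_\Bla$ as a closed piece). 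The key structural step is to verify, via proper base change along $\iota$ together with the projection formula applied to $f$, that
\[
(\pi_\Bla)_*\Ql[\dim X_\Bla]\ \simeq\ \bigl((\pi_{\Bm,1})_*\Ql[d'_\Bm]\bigr)^{W_Q}\big|_{\ol X_\Bla},
\]
so that the pushforward through $\pi_\Bla$ recovers precisely the $W_Q$-invariant part of the Springer-like decomposition of Theorem 2.4.

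Granting this, applying Theorem 2.4 and taking $W_Q$-invariants of each summand yields
\[
(\pi_\Bla)_*\Ql[\dim X_\Bla]\ \simeq\ \bigoplus_{\Bmu \in \SP(\Bm)} (V_\Bmu)^{W_Q}\otimes \IC(\ol X_\Bmu,\Ql)[\dim X_\Bmu].
\]
By Frobenius reciprocity, $\dim (V_\Bmu)^{W_Q}$ equals the multiplicity of $V_\Bmu$ in $\Ind_{W_Q}^{W_\Bm}\mathbf{1}$. Since $W_Q$ is a Young subgroup of type $\Bla^t$ inside $W_\Bm$, the classical formula
\[
\Ind_{W_Q}^{W_\Bm}\mathbf{1}\ =\ \bigoplus_\Bnu K_{\Bnu,\Bla^t}\,V_\Bnu
\]
produces the multiplicity $K_{\Bmu^t,\Bla^t}$ once one accounts for the partition-duality convention built into the type-$A$ Springer parametrization underlying Theorem 2.4. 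The support condition $\Bmu \trleq \Bla$ then emerges automatically, since $K_{\Bmu^t,\Bla^t} = 0$ unless $\Bmu^t \trreq \Bla^t$, equivalently $\Bmu \trleq \Bla$.

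The main technical obstacle will be the rigorous identification $(\pi_\Bla)_*\Ql \simeq ((\pi_{\Bm,1})_*\Ql)^{W_Q}|_{\ol X_\Bla}$: one must control how the $W_\Bm$-action on $(\pi_{\Bm,1})_*\Ql$ (of Theorem 2.4) restricts compatibly with the partial-flag fibration $f$, and in particular how the fiber cohomology $H^*(Q/B)$ couples to the $W_\Bm$-equivariant decomposition exactly through the extraction of $W_Q$-invariants. A secondary subtlety is the sign-twist/duality bookkeeping needed to land on $K_{\Bmu^t,\Bla^t}$ rather than $K_{\Bmu,\Bla^t}$; this is the place where the geometry chooses the ``correct'' Kostka pairing.
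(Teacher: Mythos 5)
Your overall plan — relate $\pi_{\Bla}$ to the Springer-type map $\pi_{\Bm,1}$ of Theorem~2.4 through an intermediate variety involving $G/B$, and extract a single $W_Q$-isotypic piece — is in the same spirit as the paper's argument, but two of your steps are genuinely wrong, and the place where they fail is precisely where the paper invests its main effort.

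First, the proposed ``key structural step'' does not follow from proper base change and the projection formula. The square you draw is \emph{not} Cartesian: the fiber product of $\wt X_{\Bla}$ and $\wt\SX_{\Bm,\unip}$ over $\SX_{\Bm,\unip}$ consists of pairs of flags $(gQ, g'B)$ with no compatibility between $g$ and $g'$, whereas $\wh X_{\Bla}$ imposes $g'B \subset gQ$. What the commutative diagram actually gives is the identity $(\pi_{\Bm,1})_*\iota_*\Ql \simeq (\pi_{\Bla})_*f_*\Ql \simeq (\pi_{\Bla})_*\Ql\otimes H^*(Q/B)$, which relates $(\pi_{\Bla})_*\Ql$ to $(\pi_{\Bm,1})_*\iota_*\Ql$, not to $(\pi_{\Bm,1})_*\Ql$ itself. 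Since $\iota$ is a proper closed embedding of positive codimension (codimension $\dim U - \dim U_Q$), $\iota_*\Ql$ is very far from the constant sheaf, and there is no short cut to relate $(\pi_{\Bm,1})_*\iota_*\Ql$ to the Springer decomposition without further work. That further work is what the paper does in 2.7--2.10: it passes to the regular semisimple locus, exploits the Galois covering $\psi$ and its factorization through $\wt G^M_{\rg}$, and only then restricts to the unipotent locus via (2.8.3) and the diagram (2.10.1).

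Second, even granting your structural claim, the isotypic component should be the \emph{sign} isotypic of $W_Q$, not the invariants, and this is not a bookkeeping or duality-convention issue. Because $\wt X_{\Bla}$ is built from $U_Q$ (the nilradical) rather than from $Q_{\unip}$, the piece of the $W_Q$-action that survives is, concretely, the restriction of $\IC(\wt G^Q,\SL'_{\ve})$ — this is exactly the content of (2.8.3), which shows that $\IC(\wt G^Q, \SL'_{\ve})$ restricts on $G\times^Q Q_{\unip}$ to a (shifted) skyscraper supported on $G\times^Q U_Q$. With the paper's stated convention that $V_{(n)}$ is trivial (so the Springer correspondence pairs $V_{\la}\leftrightarrow\SO_{\la}$ with no twist, by (2.7.9)), the trivial isotypic of $V_{\Bmu}$ under $W_Q \simeq S_{\Bla^t}$ has dimension $K_{\Bmu,\Bla^t}$ by Young's rule — and this is simply not $K_{\Bmu^t,\Bla^t}$. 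The correct answer $K_{\Bmu^t,\Bla^t}$ arises as $(V_{\Bmu}:\ve_{W_Q})$, per (2.8.1). Your closing remark that a ``secondary subtlety'' of sign-twist bookkeeping will land on the correct Kostka pairing is therefore optimistically hiding a real error: the claimed identification via invariants literally produces the wrong multiplicity, and fixing it requires the geometric input (2.8.3) that your outline omits.

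In short, the proposal has the right target picture but is missing the mechanism that forces the sign representation to appear, which is the crux of the paper's proof. The paper avoids your non-Cartesian square by factoring through $\wt G^Q$ and, for general $r$, through the Levi $L$ via the cartesian diagram (2.10.1), reducing to the already-established $r=1$ case (Proposition 2.9) factor by factor.
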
 

\para{2.7.}
The rest of this section is devoted to the proof of Theorem 2.6.
First we consider the case where $r = 1$.  Actually, the result in this case 
is contained in [AH].  Their proof (for $r = 2$) depends on the result of Spaltenstein [Sp] 
concerning the ``Springer fibre'' $(\pi_{\Bla})\iv(z)$ for $z \in \ol X_{\Bla}$ 
in the case $r = 1$. In the following, we give an alternate proof independent of [Sp] 
for the later use.
Let $Q$ be a parabolic subgroup of $G$ containing $B$, $M$ the Levi 
subgroup of $Q$ containing $T$ and $U_Q$ the unipotent radical of $Q$.
(In this stage, this $Q$ is independent of $Q$ in 2.5.)
Let $W_Q$ be the Weyl subgroup of 
$W$ corresponding to $Q$. 
Let $G\reg$ be the set of regular semisimple elements in $G$, and put $T\reg = G\reg \cap T$.
Consider the map $\psi: \wt G\reg \to G\reg$, where 

\begin{align*}
\wt G\reg = \{ (x, gT) \in G\reg \times G/T \mid g\iv xg \in T\reg \} \\
\end{align*}   
and $\psi : (x, gT) \mapsto x$. 
Then $\psi$ is a finite Galois covering with group $W $. 
We also consider a variety 

\begin{align*}
\wt G\reg^M &= \{ (x, gM) \in G\reg \times G/M \mid g\iv xg \in M\reg \},  
\end{align*} 
where $M\reg = G\reg \cap M$.
The map $\psi$ is decomposed as 

\begin{equation*}
\begin{CD}
\psi : \wt G\reg @>\psi' >>  \wt G\reg^M @> \psi''>>  G\reg, 
\end{CD}
\end{equation*}

\par\noindent
where $\psi': (x, gT) \mapsto (x, gM)$, $\psi'': (x, gM) \mapsto x$.
Here $\psi'$ is a finite Galois covering with group $W_Q$. 
Now $\psi_*\Ql$ is a semisimple local system on $G\reg$ such that 
$\End (\psi_*\Ql) \simeq \Ql[W]$, and 
is decomposed as 

\begin{equation*}
\tag{2.7.1}
\psi_*\Ql \simeq \bigoplus_{\r \in W\wg} \r \otimes \SL_{\r},
\end{equation*}
where $\SL_{\r} = \Hom_W(\r, \psi_*\Ql)$ is a simple local system on $G\reg$. 
We also have
\begin{equation*}
\tag{2.7.2}
\psi'_*\Ql \simeq \bigoplus_{\r' \in W_Q\wg}\r' \otimes \SL'_{\r'},
\end{equation*}
where $\SL'_{\r'}$ is a simple local system on $\wt G\reg^M$. 
Hence 

\begin{equation*}
\tag{2.7.3}
\psi_*\Ql \simeq \psi_*''\psi_*'\Ql \simeq \bigoplus_{\r' \in W_Q\wg}
                           \r'\otimes \psi''_*\SL'_{\r'}.
\end{equation*}
(2.7.3) gives a decompostion of $\psi_*\Ql$ with respect to the action of $W_Q$.
Comparing (2.7.1) and (2.7.3), we have 

\begin{equation*}
\tag{2.7.4}
\psi''_*\SL'_{\r'} \simeq \bigoplus_{\r \in W\wg}\Ql^{(\r: \r')}\otimes \SL_{\r},
\end{equation*}
where $(\r: \r')$ is the multiplicity of $\r'$ in the restricted 
$W_Q$-module $\r$. 
\par
We consider the map $\pi : \wt G \to G$, where 

\begin{equation*}
\wt G = \{ (x, gB) \in G \times G/B \mid g\iv xg \in B \} \simeq G \times^BB ,
\end{equation*}
and $\pi: (x, gB) \mapsto x$.  We also consider 

\begin{align*}
\wt G^Q = \{ (x, gQ) \in G \times G/Q \mid g\iv xg \in Q \} \simeq G \times^QQ. 
\end{align*}
The map $\pi$ is decomposed as 

\begin{equation*}
\begin{CD}
\pi: \wt G @>\pi'>> \wt G^Q @>\pi''>>  G,
\end{CD}
\end{equation*}
where $\pi': (x, gB) \mapsto (x, gQ)$, $\pi'': (x, gQ) \mapsto x$. 
It is well-known ([L1]) that 

\begin{equation*}
\tag{2.7.5}
\pi_*\Ql \simeq \bigoplus_{\r \in W\wg} \r \otimes \IC(G, \SL_{\r}).
\end{equation*}
Let $B_M = B \cap M$ be the Borel subgroup of $M$ containing $T$.
We consider the following commutative diagram

\begin{equation*}
\tag{2.7.6}
\begin{CD}
G \times ^BB @<\wt p<< G \times (Q\times^BB) @>\wt q>> M \times^{B_M}B_M \\
    @V\pi'VV                   @VVr V                @VV\pi^M V   \\
G \times^QQ @<p<<   G \times Q  @>q>> M ,  
\end{CD}
\end{equation*} 
where under the identification $G \times^BB \simeq G \times^Q(Q \times^BB)$, 
the maps $p,\wt p$ are defined by the quotient by $Q$. The map $q$ is 
a projection to the $M$-factor of $Q$, and $\wt q$ is the map induced 
from the projection $Q \times B \to M \times B_M$. 
$\pi^M$ is defined similarly to $\pi$ replacing $G$ by $M$.  The map $r$ 
is defined by $(g, h*x) \mapsto (g, hxh\iv)$.
(We use the notation $h*x \in Q\times^BB$ to denote the $B$-orbit in $Q \times B$
containing $(h,x)$.)  
Here all the squares are cartesian squares.  Moreover, 
\par\medskip
(a) $p$ is a principal $Q$-bundle.
\par
(b) $q$ is a locally trivial fibration with fibre isomorphic to $G \times U_Q$.
\par\medskip\noindent
Thus as in [S4, (1.5.2)], for any $M$-equivariant simple pervere sheaf $A_1$ 
on $M$, there exists a unique (up to isomorphism) simple perverse sheaf $A_2$ 
on $\wt G^Q$ such that $p^*A_2[a] \simeq q^*A_1[b]$, where 
$a = \dim Q$ and $b = \dim G + \dim U_Q$.   
\par
By using the cartesian squares in (2.7.6), and by (2.7.2), 
we see that $\pi'_*\Ql \simeq \IC(\wt G^Q, \psi'_*\Ql)$, and $\pi'_*\Ql$ is decomposed as  
\begin{equation*}
\tag{2.7.7}
\pi'_*\Ql \simeq \bigoplus_{\r' \in W_Q\wg}\r' \otimes \IC(\wt G^Q, \SL'_{\r'}).
\end{equation*} 

By comparing (2.7.4) and (2.7.7), we have

\begin{equation*}
\tag{2.7.8}
\pi''_*\IC(\wt G^Q, \SL'_{\r'}) \simeq \bigoplus_{\r \in W\wg}\Ql^{(\r:\r')}\otimes 
      \IC(G,\SL_{\r}).
\end{equation*}
Note that if $\r = V_{\la}$ for $\la \in \SP_n$, we have 
\begin{equation*}
\tag{2.7.9}
\IC(G, \SL_{\r})|_{G\uni} \simeq \IC(\ol\SO_{\la}, \Ql)[\dim \SO_{\la} - 2\nu_G] 
\end{equation*}
by [BM], where $\nu_G = \dim U$.  
Hence by restricting on $G\uni$, we have 

\begin{equation*}
\tag{2.7.10}
\pi''_*\IC(\wt G^Q, \SL'_{\r'})[2\nu_G]|_{G\uni} 
      \simeq \bigoplus_{\la \in \SP_n}\Ql^{(V_{\la} : \r')}\otimes 
      \IC(\ol\SO_{\la}, \Ql)[\dim \SO_{\la}].
\end{equation*}

\para{2.8.}
Now assume that 
$W_Q \simeq S_{\mu}$ for a partition $\mu$, where we put 
$S_{\mu} = S_{\mu_1} \times \cdots \times S_{\mu_k}$ if  
$\mu = (\mu_1, \dots, \mu_k) \in \SP_n$.  
Take $\r' = \ve$  the sign representation of $W_Q$. 
We have 
\begin{equation*}
\tag{2.8.1}
(V_{\la} : \ve) = (V_{\la^t} : 1_{W_Q}) = K_{\la^t,\mu},
\end{equation*}
where $1_{W_Q}$ is the trivial representation of $W_Q$.  
\par
The restriction of the diagram  (2.7.6) to the ``unipotent parts'' makes sense, and 
we have the commutative diagram 

\begin{equation*}
\tag{2.8.2}
\begin{CD}
G \times^BU @<<<  G \times^Q(Q \times^BU) @>>>  M \times^{B_M}U_M   \\
    @VVV                  @VVV                     @VVV   \\   
G \times^QQ\uni @<p_1<<  G \times Q\uni @>q_1>> M\uni,
\end{CD}
\end{equation*}
where $U_M$ is the unipotent radical of $B_M$, and $Q\uni, M\uni$ are the set of 
unipotent elements in $Q, M$, respectively.
$p_1, q_1$ have similar properties as (a), (b) in 2.7.
We consider $\IC(M, \SL^M_{\ve})$ on $M$, where $\SL^M_{\ve}$ is the simple local 
system on $M\reg$ corresponding to 
$\ve \in W\wg_Q$.  Then  by (2.7.6), we see that 

\begin{equation*}
p^*\IC(\wt G^Q, \SL'_{\ve}) \simeq q^*\IC(M, \SL^M_{\ve}).
\end{equation*}  
By applying (2.7.9) to $M$, 
$\IC(M, \SL^M_{\ve})|_{M\uni} \simeq \IC(\ol\SO'_{\ve}, \Ql)[\dim \SO'_{\ve} - 2\nu_M]$, 
 where $\SO'_{\ve}$ is the orbit in $M\uni$ corresponding to 
$\ve$ under the Springer correspondence, and $\nu_M$ is defined similarly to $\nu_G$.
It is known that $\SO'_{\ve}$ is the orbit $\{ e \} \subset M\uni$, where $e$ is the identity element 
in $M$.  Hence $\IC(M, \SL^M_{\ve})|_{M\uni}$ coincides with $\Ql[-2\nu_M]$ supported on 
$\{e\}$.   
It follows, by (2.8.2) 

\par\medskip\noindent
(2.8.3) \ The restriction of $\IC(\wt G^Q, \SL'_{\ve})$ on $G\times^QQ\uni$ 
coincides with $i_*\Ql[-2\nu_M]$, where $i: G \times^QU_Q \hra G \times^QQ\uni$ is 
the closed embedding. 
\par\medskip

We deifne a map $\pi_Q : G\times^QU_Q \to G\uni$ by $g*x \mapsto gxg\iv$.  
Put $\wt G^Q_1 = G\times^QU_Q$.  
\begin{prop}  
Under the notation as above, 
\begin{enumerate}
\item
$\pi''_*\IC(\wt G^Q, \SL'_{\ve})[2\nu_G]|_{G\uni} \simeq (\pi_Q)_*\Ql[\dim \wt G^Q_1]$.  

\item
We have
\begin{equation*}
(\pi_Q)_*\Ql[\dim \wt G^Q_1] \simeq \bigoplus_{\substack{\mu \in \SP_n \\ \mu \le {}^t\la}}
                    \Ql^{K_{{}^t\la, \mu}}\otimes \IC(\ol\SO_{\la}, \Ql)
       [\dim \SO_{\la}].
\end{equation*}
\end{enumerate}
\end{prop}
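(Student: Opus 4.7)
The plan is to obtain (i) directly from the structural identification (2.8.3) via proper base change, and then to derive (ii) by feeding (i) into the Springer-style decomposition (2.7.8) at $\r' = \ve$ and invoking the multiplicity formula (2.8.1).

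For (i), I would first observe that $\pi''$ is proper: it factors as the closed embedding $\wt G^Q \hra G \times G/Q$ followed by the first projection, and $G/Q$ is complete. Consequently its restriction $\pi''_1 : G \times^Q Q\uni \to G\uni$ over the closed subset $G\uni \subset G$ is proper, and the closed embedding $i : G \times^Q U_Q \hra G \times^Q Q\uni$ appearing in (2.8.3) satisfies $\pi''_1 \circ i = \pi_Q$. Applying proper base change along $G\uni \hra G$ gives
\begin{equation*}
\pi''_*\IC(\wt G^Q, \SL'_\ve)|_{G\uni} \simeq (\pi''_1)_*\bigl(\IC(\wt G^Q, \SL'_\ve)|_{G\times^Q Q\uni}\bigr),
\end{equation*}
and substituting (2.8.3) together with $(\pi''_1)_* i_* = (\pi_Q)_*$ produces $(\pi_Q)_*\Ql[-2\nu_M]$. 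Shifting by $[2\nu_G]$ yields $(\pi_Q)_*\Ql[2\nu_G - 2\nu_M]$, which matches the target $(\pi_Q)_*\Ql[\dim \wt G^Q_1]$ since $\dim \wt G^Q_1 = \dim G/Q + \dim U_Q = 2(\nu_G - \nu_M)$.

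For (ii), I would take the identity (2.7.8) with $\r' = \ve$, restrict to $G\uni$, and apply (2.7.9) to each summand, obtaining
\begin{equation*}
\pi''_*\IC(\wt G^Q, \SL'_\ve)[2\nu_G]|_{G\uni} \simeq \bigoplus_{\la \in \SP_n} \Ql^{(V_\la : \ve)} \otimes \IC(\ol\SO_\la, \Ql)[\dim \SO_\la].
\end{equation*}
Then (2.8.1) identifies $(V_\la : \ve) = K_{\la^t, \mu}$, which vanishes unless $\mu \le \la^t$, and combining with (i) gives the claimed decomposition.

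The main technical point lies in the bookkeeping of cohomological shifts in step (i): the shift $[-2\nu_M]$ inherited from (2.8.3) must combine with the global shift $[2\nu_G]$ to match the perverse normalization $[\dim \wt G^Q_1]$. This rests on the dimension identity $\dim \wt G^Q_1 = 2(\nu_G - \nu_M)$, which comes from a direct count using $\dim G/Q = \nu_G - \nu_M$ and $\dim U_Q = \nu_G - \nu_M$. Once this numerical check is in hand, the remainder of the argument is essentially formal, as (ii) assembles results already established earlier in the section.
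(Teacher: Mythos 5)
Your proof is correct and follows essentially the same route as the paper: both rely on (2.8.3) combined with proper base change for the cartesian square over $G\uni \hra G$, the factorization $\pi_Q = \pi''_1 \circ i$, the dimension identity $\dim \wt G^Q_1 = 2\nu_G - 2\nu_M$, and then for (ii) the combination of (2.7.10) (which is (2.7.8) restricted to $G\uni$ via (2.7.9)) with the multiplicity formula (2.8.1). The only superficial difference is the order in which you substitute (2.8.3) relative to the base-change step.
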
 

\begin{proof}
Note that $2\nu_G - 2\nu_M = 2\dim U_Q = \dim \wt G^Q_1$.  
Thus by (2.8.3), 
\begin{equation*}
\tag{2.9.1}
\IC(\wt G^Q, \SL'_{\ve})[2\nu_G]|_{G \times^QQ\uni} \simeq i_*\Ql[\dim \wt G^Q_1]. 
\end{equation*}
By applying the base change theorem to the cartesian square

\begin{equation*}
\begin{CD}
G \times^QQ\uni @>>>  G \times^QQ \\
@V\pi_1''VV                    @VV\pi''V   \\
G\uni        @>>>   G, 
\end{CD}
\end{equation*}
we obtain (i) from (2.9.1) since $\pi_Q = \pi''_1\circ i$. 
Then (ii) follows from  (i) 
by using (2.7.10) and (2.8.1). 
\end{proof}

\para{2.10.}
Returning to the setting in 2.5, we consider the case where $r$ is arbitrary. 
We fix $\Bm \in \SQ_{n,r}$, and let $P = P_{\Bm}$ be the parabolic subgroup of 
$G$ containing $B$ which is the stabilizer of the partial flag $(M_{p_i})_{1 \le i \le r}$. 
Let $L$ be the Levi subgroup of $P$ containing $T$, and $B_L = B \cap L$ the Borel 
subgroup of $L$ containing $T$. Let $U_L$ be the unipotent radical of $B_L$.   
Put $\ol M_{p_i} = M_{p_i}/M_{p_{i-1}}$ for each $i$, under the convention $M_{p_0} = 0$.
Then $L$ acts naturally on $\ol M_{p_i}$, and by applying the definition of 
$\pi_{\Bm,1} : \wt\SX_{\Bm,\unip} \to \SX_{\Bm, \unip}$ to $L$, we can define 

\begin{align*}
\wt\SX^L_{\Bm, \unip} &\simeq L \times^{B_L}(U_L \times \prod_{i=1}^{r-1}\ol M_{p_i}), \\
\SX^L_{\Bm,\unip} &= \bigcup_{g \in L}g(U_L \times \prod_{i = 1}^{r-1}\ol M_{p_i}) 
= L\uni \times \prod_{i=1}^{r-1} \ol M_{p_i}
\end{align*}
and the map $\pi^L_{\Bm,1} : \wt\SX^L_{\Bm, \unip} \to \SX^L_{\Bm, \unip}$ similarly. 

Let $Q = Q_{\Bla}$ be as in 2.5 for $\Bla \in \SP(\Bm)$. 
Thus we have $B \subset Q \subset P$, and
$Q_L = Q \cap L$ is a parabolic subgroup of 
$L$ containing $B_L$. 
We consider the following commutative diagram
\begin{equation*}
\tag{2.10.1}
\begin{CD}
\wt\SX_{\Bm,\unip} @<\wt p_1<<  G \times \wt\SX^P_{\Bm,\unip} @>\wt q_1>>  \wt\SX^L_{\Bm,\unip}  \\
     @V\a'_1VV                 @VV r'_1 V                       @VV\b'_1 V    \\
\wh \SX^Q_{\Bm,\unip}  @<\wh p_1 <<  G \times \wt\SX_{\Bm,\unip}^{P,Q}  @>\wh q_1>>  
                                  \wt\SX^{L, Q_L}_{\Bm,\unip}  \\
        @V\a''_1VV                      @VVr_1''V                        @VV\b''_1V  \\
\wh \SX^P_{\Bm, \unip}    @<p_1<<   G \times \SX_{\Bm, \unip}^P @>q_1>>  \SX^L_{\Bm,\unip}  \\
      @V\pi''_1 VV                                                       \\
   \SX_{\Bm,\unip}, 
\end{CD}
\end{equation*}
where, by putting $P\uni = L\uni U_P$ (the set of unipotent elements in $P$),  
\begin{align*}
\SX_{\Bm,\unip}^P &= \bigcup_{g \in P}g(U \times \prod_i M_{p_i}) = P\uni \times \prod_i M_{p_i}, \\ 
\wh \SX^P_{\Bm,\unip} &= G \times^P\SX_{\Bm,\unip}^P = G \times^P(P\uni \times \prod_iM_{p_i}), \\ 
\wt \SX^P_{\Bm,\unip} &= P \times^{B}(U \times \prod_iM_{p_i}), \\
\wh \SX^Q_{\Bm,\unip} &= G \times^Q(Q\uni \times \prod_i M_{p_i}), \\
\wt\SX^{P,Q}_{\Bm, \unip} &= P \times^Q(Q\uni \times \prod_i M_{p_i}).  
\end{align*}
$\wt\SX^{L,Q_L}_{\Bm \unip}$ is a similar variety as $\wh\SX^P_{\Bm, \unip}$ 
defined with respecto to $(L, Q_L)$, namely, 

\begin{equation*}
\wt\SX^{L, Q_L}_{\Bm, \unip} =  L \times^{Q_L}((Q_L)\uni \times \prod_i\ol M_{p_i}). 
\end{equation*}
The maps are defined as follows; 
under the identification $\wt\SX_{\Bm, \unip} \simeq G \times^B(U \times \prod_iM_{p_i})$, 
$\a'_1, \a''_1$ are the natural maps induced from the inclusions 
$G \times (U \times \prod M_{p_i}) \to G \times (Q\uni \times \prod M_{p_i}) 
      \to G \times (P\uni \times \prod M_{p_i})$. 
$\pi_1'': g*(x,\Bv) \mapsto (gxg\iv, g\Bv)$. 
$q_1$ is defined by $(g,x,\Bv) \mapsto (\ol x, \ol \Bv)$, where 
$x \to \ol x$, $\Bv \mapsto \ol \Bv$ are natural maps 
$P \to L, \prod_iM_{p_i} \to \prod_i\ol M_{p_i}$. 
$\wt q_1$ is the composite of the projection 
$G \times \wt\SX^P_{\Bm,\unip} \to \wt\SX^P_{\Bm, \unip}$
and the map $\wt\SX^P_{\Bm, \unip} \to \wt\SX^L_{\Bm,\unip}$ induced from 
the projection $P \times (U \times \prod M_{p_i}) \to L \times (U_L \times \prod \ol M_{p_i})$.
$\wh q_1$ is defined similarly by using the map 
$\wt\SX^{P,Q}_{\Bm,\unip} \to \wh\SX^{L,Q_L}_{\Bm,\unip}$ 
induced from the projection $P \times (Q\uni \times \prod M_{p_i}) 
\to L \times ((Q_L)\uni \times \prod \ol M_{p_i})$.
$p_1$ is the quotient by $P$.  $\wt p_1$ and $\wh p_1$ are also quotient by $P$
under the identifications $\wt\SX_{\Bm,\unip} \simeq G \times^P \wt\SX^P_{\Bm,\unip}$,
$\wh\SX^Q_{\Bm\unip} \simeq G \times^P\wt\SX^{P,Q}_{\Bm, \unip}$.  
$\b_1'$ is defined similarly to $\a_1'$ and $\b_1''$ is defined similarly to $\pi_1''$. 
$r'_1$ is the natural map induced from the injection 
$P \times (U \times \prod M_{p_i}) \to P \times (Q\uni \times \prod M_{p_i})$, 
and $r_1''$ is the natural map induced from the map 
$P \times^Q(Q\uni \times \prod M_{p_i}) \to P\uni \times \prod M_{p_i}$, 
$g*(x,\Bv) \mapsto (gxg\iv, g\Bv)$.
\par
Put $\pi'_1 = \a_1''\circ \a_1': \wt\SX_{\Bm, \unip} \to \wh\SX^P_{\Bm, \unip}$. 
We have $\b_1''\circ \b_1' = \pi^L_{\Bm,1}$, and the diagram (2.10.1) is the refinement of 
the diagram (6.3.2) in [S4] (see also the diagram (1.5.1) in [S4]). 
In particular, the map $p_1$ is a principal $P$-bundle, and the map $q_1$ is a locally 
trivial fibration with fibre isomorphic to $G \times U_P \times \prod_{i=1}^{r-2}M_{p_i}$. 
Moreover, all the squares appearing in (2.10.1) are caetesian squares. 
Hence the diagram (2.10.1) satisfies similar properties as in the diagram (2.8.2).
\par
Note that $L \simeq G_1 \times \cdots \times G_r$ with $G_i = GL(\ol M_{p_i})$. 
Then $Q_L$ can be written as $Q_L \simeq Q_1 \times \cdots \times Q_r$, where 
$Q_i$ is a parabloic subgroup of $G_i$. 
We have 
\begin{align*}
\wt\SX^L_{\Bm, \unip} &\simeq \prod_{i=1}^r(\wt G_i)\uni \times V,  \\
\wh \SX^{L,Q_L}_{\Bm, \unip} &\simeq \prod_{i=1}^r (\wt G_i^{Q_i})\uni \times V, \\
\SX^L_{\Bm, \unip} &\simeq \prod_{i=1}^r (G_i)\uni \times V,
\end{align*}
where $(\wt G_i)\uni, (\wt G_i^{Q_i})\uni$, etc. denote the unipotent parts of 
$\wt G_i, \wt G_i^{Q_i}$, etc. as in (2.8.2). The maps $\b_1', \b_1''$ are induced from 
the maps $(\wt G_i)\uni \to (\wt G_i^{Q_i})$, $(\wt G_i^{Q_i})\uni \to (G_i)\uni$, and those
maps coincide with the maps $\pi', \pi''$ in 2.7 defined with respect to $G_i$.   
Note that $W_{Q_i} \simeq S_{(\la^{(i)})^t}$ for each $i$ by 
the construction of $Q = Q_{\Bla}$ in 2.5.
Put 
\begin{equation*}
\wh \SX^Q_1 = G \times^Q(U_Q \times \prod M_{p_i}),  \quad 
\wt\SX^{L,Q_L}_1 = L \times^{Q_L}(U_{Q_L} \times \prod \ol M_{p_i}), 
\end{equation*}  
and let $i_Q : \wh\SX_1^Q \hra \wh \SX^Q_{\Bm, \unip}, 
         i_{Q_L} : \wt \SX^{L, Q_L}_1 \hra  \wt \SX^{L,Q_L}_{\Bm \unip}$ be 
the closed embeddings. 
Let $\pi^L_{Q_L} : \wt \SX^{L,Q_L}_1 \to \SX^L_{\Bm, \unip}$ be 
the restriction of $\b_1''$.
Let $\SO^L_{\Bmu} \simeq \SO'_{\mu^{(1)}} \times \cdots \times \SO'_{\mu^{(r)}}$ be 
the $L$-orbit in $\SX^L_{\Bm, \unip}$, 
where $\SO'_{\mu^{(i)}}$ is the $G_i$-oribt in $(G_i)\uni \times \ol M_{p_i}$ of type 
$(\mu^{(i)}, \emptyset)$.  Note that if we denote by $\SO_{\mu^{(i)}}$ the $G_i$-orbit
in $(G_i)\uni$ of type $\mu^{(i)}$,  we have 
$\IC(\ol\SO'_{\mu^{(i)}}, \Ql) \simeq \IC(\ol\SO_{\mu^{(i)}}, \Ql) \boxtimes \Ql$ 
(the latter term $\Ql$ denotes the constatn sheaf on $\ol M_{p_i}$).
Hence the decompostion of $\pi^L_{Q_L}$ into simple components is described by 
considering the factors
$\IC(\ol \SO_{\mu^{(i)}}, \Ql)$.
In particular, by Proposition 2.9, we have 

\begin{equation*}
\tag{2.10.2}
(\pi^L_{Q_L})_*\Ql[\dim  \wt\SX_1^{L,Q_L}] \simeq 
          \bigoplus_{\Bmu \trleq \Bla}
        \Ql^{K_{\Bmu^t, \Bla^t}}\otimes \IC(\ol \SO^L_{\Bmu}, \Ql)[\dim \SO^L_{\Bmu}]. 
\end{equation*} 
\par
By using the diagram (2.10.1), we see that 
\begin{equation*}
\wh q_1^*(i_{Q_L})_*\Ql[\dim \wt\SX^{L, Q_L}_1] \simeq \wh p_1^*(i_Q)_*\Ql[\dim \wt X_{\Bla}]. 
\end{equation*}
It follows, again by using the diagram (2.10.1), we have

\begin{equation*}
\tag{2.10.3}
(\a_1'')_*(i_Q)_*\Ql[\dim \wt X_{\Bla}] \simeq \bigoplus_{\Bmu \trleq \Bla}
            \Ql^{K_{\Bmu^t, \Bla^t}}\otimes B_{\Bmu},
\end{equation*}
where $B_{\Bmu}$ is the simple perverse sheaf on $\wh\SX^P_{\Bm, \unip}$ 
characterized by the property that 
\begin{equation*}
 p_1^*B_{\Bmu}[a'] \simeq  q_1^*\IC(\ol\SO^L_{\Bmu}, \Ql)[b' + \dim \SO^L_{\Bmu}]
\end{equation*}
with $a' = \dim P$, $b' = \dim G + \dim U_P + \dim \prod_{i=1}^{r-2}M_{p_i}$.
\par
On the other hand, by Proposition 1.6 in [S4], we have 
\begin{equation*}
\pi''_*A_{\Bmu} \simeq \IC(\SX_{\Bm}, \SL_{\Bmu})[d_{\Bm}],
\end{equation*}
where $\pi'': \wh\SX^P_{\Bm} = G \times^P(P \times \prod_iM_{p_i}) 
      \to \SX_m $
is an analogous map to $\pi''_1$, and $A_{\Bmu}$ is a simple perverse sheaf on $\wh \SX^P_{\Bm}$
such that the restriction of $A_{\Bmu}$ on $\wh\SX^P_{\Bm, \unip}$ coincides with $B_{\Bmu}$, 
up to shift.
Thus by Theorem 2.4 (ii), we have

\begin{equation*}
\tag{2.10.4}
(\pi''_1)_*B_{\Bmu} \simeq \IC(\ol X_{\Bmu}, \Ql)[\dim X_{\Bmu}]. 
\end{equation*} 
Since $\pi_{\Bla} = \pi_1''\circ \a_1''\circ i_Q$,  by applying $(\pi''_1)_*$ on both sides of 
(2.10.3), we obtain the formula (2.6.1).  This completes the proof of Theorem 2.6.

\par\bigskip
\section{$G^F$-invariant functions on the enhanced variety \\ 
and Kostka functions}

\para{3.1.}
We now assume that $G$ and $V$ are defined over $\Fq$, and let 
$F: G \to G, F: V \to V$ be the corresponding Frobenius maps. 
Assume that $B$ and $T$ are $F$-stable.  
Then $X_{\Bla}$ and $\wt X_{\Bla}$ have natrual $\Fq$-structures, and the map 
$\pi_{\Bla}: \wt X_{\Bla} \to \ol X_{\Bla}$ is $F$-equivariant. 
Thus  one can define a canonical isomorphsim $\vf : F^*K_{\Bla} \isom K_{\Bla}$ 
for $K_{\Bla} = (\pi_{\Bla})_*\Ql$. 
By using the decomposition in Theorem 2.6, 
$\vf$ can be written as $\vf = \sum_{\Bmu}\s_{\Bmu} \otimes \vf_{\Bmu}$, 
where $\s_{\Bmu}$ is the identity map on $\Ql^{K_{\Bmu^t, \Bla^t}}$ and 
$\vf_{\Bmu} : F^*L_{\Bmu} \isom  L_{\Bmu}$ is the isomorphism induced from $\vf$ 
for $L_{\Bmu} = \IC(\ol X_{\Bmu}, \Ql)$.  
(Note that $\dim X_{\Bla} - \dim X_{\Bmu}$ is even if $\Bmu \trleq \Bla$ by 
[S4, Prop. 4.3], so the degree shift is negligible). 
We also consider the natural isomorphism $\f_{\Bmu} : F^*L_{\Bmu} \isom L_{\Bmu}$
induced from the $\Fq$-strucutre of $X_{\Bmu}$.  
By using a similar argument as in [S4, (6.1.1)], we see that
\begin{equation*}
\tag{3.1.1}
\vf_{\Bmu} = q^{d_{\Bmu}}\f_{\Bmu},
\end{equation*}
where $d_{\Bmu} = n(\Bmu)$.  
We consider the characteristic function $\x_{L_{\Bmu}}$ of $L_{\Bmu}$
with respect to $\f_{\Bmu}$, which is a $G^F$-invariant function on $\ol X_{\Bmu}^F$. 

\para{3.2.}
Take $\Bmu, \Bnu \in \SP_{n,r}$, and assume that $\Bnu \in \SP(\Bm)$.  
For each $z = (x, \Bv) \in X_{\Bmu}$ with $\Bv = (v_1, \dots, v_{r-1})$, we define a
variety $\SG_{\Bnu,z}$ by 

\begin{equation*}
\tag{3.2.1}
\begin{split}
\SG_{\Bnu,z} = \{ (W _{p_i}) &\text{ : $x$-stable flag } \mid v_i \in W_{p_i} 
         \ (1 \le i \le r-1), \\
             &x|_{W_{p_i}/W_{p_{i-1}}} 
                             \text{: type $\nu^{(i)}$ } \ (1 \le i \le r)  \}.
\end{split}
\end{equation*}
If $z \in X_{\Bmu}^F$, the variety $\SG_{\Bnu,z}$ is defined over $\Fq$. 
Put $g_{\Bnu,z}(q) = |\SG_{\Bnu,z}^F|$. 
Let $\wt K_{\la,\mu}(t)$ be the modified Kostka polynomial indexed by partitions 
$\la, \mu$. 
The following result is a generalization of Proposiition 5.8 in [AH]. 
\begin{prop}  
Assume that $\Bla, \Bmu \in \SP_{n,r}$.  For each $z \in X_{\Bmu}^F$, we have 

\begin{equation*}
\x_{L_{\Bla}}(z) = q^{-n(\Bla)}\sum_{\Bnu \trleq \Bla}g_{\Bnu, z}(q)
                     \wt K_{\la^{(1)},\nu^{(1)}}(q)\cdots \wt K_{\la^{(r)},\nu^{(r)}}(q).
\end{equation*}
\end{prop}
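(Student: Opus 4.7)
The plan is to compute $|\pi_{\Bla}\iv(z)^F|$ in two different ways and invert the resulting triangular system.  By the Grothendieck--Lefschetz trace formula for the proper map $\pi_{\Bla}$, we have $\x_{K_{\Bla}}(z) = |\pi_{\Bla}\iv(z)^F|$.  Using the decomposition of $(\pi_{\Bla})_*\Ql$ in Theorem 2.6 together with the eigenvalue formula (3.1.1) (where the perverse-shift parities introduce no signs, thanks to the evenness of $\dim X_{\Bla}-\dim X_{\Btau}$ observed in 3.1), this yields
\begin{equation*}
|\pi_{\Bla}\iv(z)^F| \;=\; \sum_{\Btau \trleq \Bla} K_{\Btau^t,\Bla^t}\, q^{n(\Btau)}\, \x_{L_{\Btau}}(z). \tag{$\star$}
\end{equation*}

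Next I would compute $|\pi_{\Bla}\iv(z)^F|$ combinatorially.  An $\Fq$-point of the fiber over $z=(x,\Bv)$ is precisely a partial flag $(W_{n(i,j)})$ with prescribed jump sizes, stable under $x$, on whose graded pieces $x$ acts trivially, and with $v_i \in W_{p_i}$.  Classifying such flags by the Jordan type $\Bnu = (\nu^{(1)},\dots,\nu^{(r)})$ of $x$ on the graded pieces of the outer subflag $(W_{p_i})$, the outer flags are enumerated by $\SG_{\Bnu,z}^F$, and once an outer flag is fixed the refinement problem inside each quotient $W_{p_i}/W_{p_{i-1}}$ is independent of the other blocks.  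Hence
\[
|\pi_{\Bla}\iv(z)^F| \;=\; \sum_{\Bnu} g_{\Bnu,z}(q) \prod_{i=1}^{r} |\pi_{\la^{(i)}}\iv(x_i)^F|,
\]
where $x_i := x|_{W_{p_i}/W_{p_{i-1}}}$ and each factor is the $r=1$ Springer-type count for $GL(W_{p_i}/W_{p_{i-1}})$ associated to $\la^{(i)}$.

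For each block factor I would invoke the $r=1$ case of the proposition, which is Lusztig's formula (0.1): for $y \in \SO_{\nu}$ in $GL_m$, $\x_{L_{\la}}(y) = q^{-n(\la)}\wt K_{\la,\nu}(q)$.  Substituting this into the $r=1$ instance of $(\star)$ yields $|\pi_{\la}\iv(y)^F| = \sum_{\nu \le \sigma \le \la} K_{\sigma^t,\la^t}\wt K_{\sigma,\nu}(q)$.  Plugging these block factors back into the combinatorial count and interchanging the order of summation gives
\[
|\pi_{\Bla}\iv(z)^F| \;=\; \sum_{\Btau \trleq \Bla} K_{\Btau^t,\Bla^t}\Bigl(\sum_{\Bnu \trleq \Btau} g_{\Bnu,z}(q)\prod_i \wt K_{\tau^{(i)},\nu^{(i)}}(q)\Bigr).
\]

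Finally, equating this with $(\star)$: both expressions present $|\pi_{\Bla}\iv(z)^F|$ as an application of the same matrix $(K_{\Btau^t,\Bla^t})$ to a sequence indexed by $\Btau$.  Since this matrix is triangular with respect to $\trleq$ and has $1$'s on the diagonal (because $K_{\Bla^t,\Bla^t}=\prod_i K_{(\la^{(i)})^t,(\la^{(i)})^t}=1$), it is invertible; a straightforward induction on $\Bla$ forces the termwise equality
\[
q^{n(\Bla)}\x_{L_{\Bla}}(z) \;=\; \sum_{\Bnu \trleq \Bla} g_{\Bnu,z}(q)\prod_i \wt K_{\la^{(i)},\nu^{(i)}}(q),
\]
which is the desired formula.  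The main technical obstacle is the fiber factorization in the second step: one must verify carefully that the condition $v_i \in W_{p_i}$ imposes no further constraint on the refinements within each block, and that the block-wise count matches the $r=1$ variety $\wt X_{\la^{(i)}}$ over the unipotent orbit $\SO_{\nu^{(i)}}$ on the nose.
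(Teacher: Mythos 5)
Your proposal is correct and follows essentially the same path as the paper's own proof: express $|\pi_{\Bla}\iv(z)^F|$ once via Theorem 2.6 together with (3.1.1) and Grothendieck's trace formula, and once by factoring the fiber along the outer flag $(W_{p_i})$ and reducing each graded block to the $r=1$ Lusztig formula, then invert the unitriangular matrix $(K_{\Bxi^t,\Bla^t})$. The fiber factorization you flag as the main technical point is handled in the paper in exactly the way you describe (the $v_i$ constraints only involve the outer subflag, so the refinements in each block are independent and match the $r=1$ Springer-type count for $GL(\ol M_{p_i})$).
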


\begin{proof}
Let $\x_{K_{\Bla}, \vf}$ be the characteristic function of $K_{\Bla}$ with respect to 
$\vf$.  
By Theorem 2.6 together with (3.1.1), we have

\begin{equation*}
\tag{3.3.1}
\x_{K_{\Bla}, \vf} = \sum_{\Bxi \trleq \Bla}K_{\Bxi^t, \Bla^t}q^{n(\Bxi)}\x_{L_{\Bxi}}.
\end{equation*}  
On the other hand, by the Grothendieck's fixed point formula, we have 
$\x_{K_{\Bla}, \vf}(z) = |\pi_{\Bla}\iv(z)^F|$ for $z \in \ol X_{\Bla}^F$. 
Then if $z = (x, \Bv) \in X_{\Bmu}^F$, 

\begin{equation*}
\tag{3.3.2}
|\pi_{\Bla}\iv(z)^F| = \sum_{\Bnu \in \SP_{n,r}}|\SG_{\Bnu,z}^F|\prod_i|\pi_{\la^{(i)}}\iv(x_i)^F|, 
\end{equation*}
where $\pi_{\la^{(i)}} : \wt\SO_{\la^{(i)}} \to \ol\SO_{\la^{(i)}}$ is a similar map 
as $\pi_{\Bla}$ applied to the case $r = 1$, by replacing $G$ by $G_i = GL(\ol M_{p_i})$, 
and $x_i = x|_{\ol M_{p_i}}$ has Jordan type $\nu^{(i)}$.   
It is known by [L1] that  
$q^{n(\xi^{(i)})}\x_{L_{\xi^{(i)}}}(x_i) = \wt K_{\xi^{(i)}, \nu^{(i)}}(q)$ 
for a partition $\xi^{(i)}$ of $m_i$.
It follows, by applying (3.3.1) to the case where $r = 1$, 
and by the Grothendieck's fixed point formula, 
we have
\begin{equation*}
|\pi_{\la^{(i)}}\iv(x_i)^F| = \sum_{\xi^{(i)} \le \la^{(i)}}
          K_{\xi^{(i)t}, \la^{(i)t}}\wt K_{\xi^{(i)}, \nu^{(i)}}(q). 
\end{equation*}
Then (3.3.2) implies that 

\begin{equation*}
\tag{3.3.3}
\x_{K_{\Bla}, \vf} = 
|\pi_{\Bla}\iv(z)^F| = \sum_{\Bnu \in \SP_{n,r}}g_{\Bnu,z}(q)\sum_{\Bxi \trleq \Bla}
                        K_{\Bxi^t, \Bla^t}\wt K_{\xi^{(1)}, \nu^{(1)}}(q)\cdots
                            \wt K_{\xi^{(r)}, \nu^{(r)}}(q).
\end{equation*}
Since $(K_{\Bxi^t, \Bla^t})_{\Bla, \Bxi}$ is a unitriangular matrix with respect 
to the partial order $\Bxi \trleq \Bla$, by comparing (3.3.1) and (3.3.3), 
we obtain the required formula. 
\end{proof}

\remark{3.4.}
In general, $X_{\Bmu}$ consists of infinitely many $G$-orbits. Hence the value 
$g_{\Bnu, z}(q)$ may depend on the choice of $z \in X_{\Bmu}^F$.  However, if 
$X_{\Bmu}$ is a single $G$-orbit, then $X_{\Bmu}^F$ is also a single $G^F$-orbit, and 
$g_{\Bnu,z}(q)$ is constant for $z \in X_{\Bmu}^F$, in which case, we denote 
$g_{\Bnu,z}(q)$ by $g_{\Bnu}^{\Bmu}(q)$.  In what follows, we show in some special 
cases that there exists a polynomial $g_{\Bnu}^{\Bmu}(t) \in \BZ[t]$ such that 
$g_{\Bnu}^{\Bmu}(q)$ coincides with the value at $t = q$ of $g_{\Bnu}^{\Bmu}(t)$. 

\para{3.5.}
We consider the special case where $\Bmu \in \SP(\Bm')$ is such that 
$m_i' = 0$ for $i = 1, \dots, r-2$.  In this case, $X_{\Bmu}$ consists of a single 
$G$-orbit. 
In particular, for $\Bla \in \SP_{n,r}$, 
$\dim \SH^i_z\IC(\ol X_{\Bla}, \Ql)$ does not depend on the chocie 
of $z \in X_{\Bmu}$.  
We define a polynomial $\IC^-_{\Bla, \Bmu}(t) \in \BZ[t]$ by 

\begin{equation*}
\IC^-_{\Bla,\Bmu}(t) = \sum_{i \ge 0}\dim \SH^{2i}_z\IC(\ol X_{\Bla}, \Ql)t^i.
\end{equation*}

The following result was proved in [S4].

\begin{prop}[{[S4, Prop. 6.8]}]   
Let $\Bla, \Bmu \in \SP_{n,r}$, and assume that $\Bmu$ is as in 3.5.
\begin{enumerate}
\item
Assume that $z \in X^F_{\Bmu}$.  Then $\SH^i_z\IC(\ol X_{\Bla}, \Ql) = 0$ if $i$ is odd, 
and the eigenvalues of $\f_{\Bla}$ on $\SH^{2i}_z\IC(\ol X_{\Bla}, \Ql)$ are $q^i$.
In particular, $\x_{L_{\Bla}}(z) = \IC^-_{\Bla, \Bmu}(q)$. 
\item 
$\wt K^-_{\Bla, \Bmu}(t) = t^{a(\Bla)}\IC^-_{\Bla, \Bmu}(t^r)$. 
\end{enumerate}
\end{prop}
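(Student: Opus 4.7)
The plan is to derive Proposition 3.6 by specializing Proposition 3.3 to the restricted class of $\Bmu$ from 3.5, and matching the resulting $q$-polynomial expression for $\x_{L_{\Bla}}(z)$ with the modified Kostka function via the characterization of $K^-_{\Bla,\Bmu}(t)$ given in Proposition 1.3.

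For part (i), the main point is purity together with the semisimplicity of Frobenius on the stalks $\SH^i_z\IC(\ol X_{\Bla}, \Ql)$. Since $X_{\Bmu}$ is a single $G$-orbit in this case, these stalks are independent of $z \in X_{\Bmu}^F$. I would first exhibit a paving of the fiber $\pi_{\Bla}\iv(z)$ by affine spaces, analogous to the Shimomura--Spaltenstein paving of Springer fibers but adapted to the enhanced data $\Bv$ in the special $\Bmu$. Combined with the decomposition of $(\pi_{\Bla})_*\Ql$ from Theorem 2.6 and the base-change analysis from the diagram (2.10.1), such a paving yields both the vanishing of odd stalks and the identification of the eigenvalues of $\f_{\Bla}$ on $\SH^{2i}_z$ as $q^i$. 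The identity $\x_{L_{\Bla}}(z) = \IC^-_{\Bla,\Bmu}(q)$ then follows from the Grothendieck trace formula applied to $L_{\Bla}$.

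For part (ii), I would combine (i) with Proposition 3.3 applied at $z \in X_{\Bmu}^F$ to obtain
\begin{equation*}
\IC^-_{\Bla,\Bmu}(q) = q^{-n(\Bla)}\sum_{\Bnu \trleq \Bla}g^{\Bmu}_{\Bnu}(q)\prod_{i=1}^r\wt K_{\la^{(i)},\nu^{(i)}}(q).
\end{equation*}
Since $\mu^{(i)} = \emptyset$ for $i \le r-2$, the constraints defining $\SG_{\Bnu,z}$ in (3.2.1) force $\Bnu$ into a restricted shape, and for each contributing $\Bnu$ the point count $g^{\Bmu}_{\Bnu}(q)$ is expressible as a polynomial in $q$ via an explicit stratification. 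What remains is to recognize the resulting polynomial identity, after the substitution $t \mapsto t^r$ and multiplication by $t^{a(\Bla)}$, as the defining expansion (1.4.1) of $\wt K^-_{\Bla, \Bmu}(t)$; this should follow by comparing both sides in the power-sum basis $p_{\Bla}$ of 1.2 and invoking the orthogonality relation (1.1.5) together with Proposition 1.3.

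The main obstacle is the last step: $K^-_{\Bla, \Bmu}(t)$ is defined via the sesquilinear form (1.2.4), which is twisted by the primitive $r$-th root of unity $\z$, whereas the geometric side produces honest $q$-polynomials. Justifying the substitution $t \mapsto t^r$ amounts to showing that at points of $X_{\Bmu}$ of the special shape in 3.5, the twists $\z^{k-1}$ appearing in (1.2.2) effectively collapse, so that the relevant Hall--Littlewood identities degenerate into an $r$-fold rescaling in the variable $t$. One natural route, as in [S4], is to lift the entire computation to the intermediate variety $\wh\SX^P_{\Bm,\unip}$ of 2.10, where the $r$-fold level structure manifests as a grading on the stalk cohomology compatible with Frobenius, and where the $\z^{k-1}$ factors can be identified with the action of a cyclic group of diagram automorphisms.
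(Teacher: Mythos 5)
The paper does not prove Proposition 3.6; it is quoted verbatim from [S4, Prop.~6.8], and the argument lives in that other paper. So there is no "paper's own proof" here to compare against, and your proposal should be assessed as a reconstruction attempt. With that caveat, here is how it reads.

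For part~(i), your instinct to prove vanishing of odd stalks and the $q^i$-eigenvalue statement via an affine paving of the fibre $\pi_{\Bla}\iv(z)$, combined with the decomposition of $(\pi_{\Bla})_*\Ql$ from Theorem~2.6 and base change along~(2.10.1), is a reasonable line of attack. This is the standard mechanism for such purity statements, and the hypothesis that $\Bmu$ has $\mu^{(i)}=\emptyset$ for $i\le r-2$ (note: in 3.5 the last \emph{two} components may be nonzero, not just $\mu^{(r)}$) ensures $X_{\Bmu}$ is a single orbit so the stalks are well-defined independently of $z$. But you have not exhibited the paving nor verified that the fibre is indeed paved for this class of $\Bmu$; that is where the actual work of (i) lies.

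For part~(ii), there is a genuine gap in the last step, and you flag it yourself but do not close it. After combining (i) with Proposition~3.3 you obtain, correctly, the identity
\begin{equation*}
\IC^-_{\Bla,\Bmu}(q) = q^{-n(\Bla)}\sum_{\Bnu \trleq \Bla} g^{\Bmu}_{\Bnu}(q)\,\prod_{i=1}^r\wt K_{\la^{(i)},\nu^{(i)}}(q),
\end{equation*}
which after the substitution $q\mapsto t^r$ and multiplication by $t^{a(\Bla)}$ gives a concrete polynomial. The claim of (ii) is that this polynomial equals $\wt K^-_{\Bla,\Bmu}(t)$, where $K^-$ is \emph{defined} by the expansion (1.4.1) in the Hall--Littlewood basis $P^-_{\Bmu}$, which in turn is characterized by the sesquilinear form (1.2.4) built from $\z$-twisted power sums. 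You propose to ``recognize'' this identification by comparing both sides in the power-sum basis and invoking the orthogonality (1.1.5) together with Proposition~1.3. This does not suffice: the orthogonality (1.1.5) is the one for ordinary Hall--Littlewood functions in a single set of variables, whereas what one must control is the $\Xi$-side form (1.2.4) with its $\z^{k-1}$-twisted $z_{\Bla}(t)$ factors. Moreover, the formulas in this paper that do translate between $P^-_{\Bmu}$ and products of ordinary $P_{\nu^{(i)}}(x^{(i)};t^r)$ --- namely Lemma~3.9 and Proposition~3.17 --- are themselves derived from (3.7.1), which already uses Proposition~3.6(ii); so you cannot invoke any of that apparatus without circularity. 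The bridge between the geometric side (characteristic functions of $\IC$ sheaves) and the algebraic side (Kostka functions defined via (1.2.4)) is precisely the nontrivial content of [S4, Prop.~6.8], and it must be built from ingredients independent of the present paper's downstream corollaries --- in [S4] this goes through a Green-function/Springer-correspondence computation, not a formal power-sum comparison. Your closing remark that the $\z^{k-1}$ twists ``effectively collapse'' and manifest as an $r$-fold rescaling is a statement of what needs to be proved, not an argument for it.
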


As a corollary, we have the following result, which is a 
generalization of [AH, Prop. 5.8] (see also [LS, Prop. 3.2]).  

\begin{cor}  
Assume that $\Bmu$ is as in 3.5.  
\begin{enumerate}
\item
There exists a polynomial $g_{\Bnu}^{\Bmu}(t) \in \BZ[t]$ 
such that $g_{\Bnu}^{\Bmu}(q)$ coincides with the value at 
$t = q$ of $g^{\Bmu}_{\Bnu}(t)$.  
\item
We have
\begin{equation*}
\tag{3.7.1}
\wt K^-_{\Bla, \Bmu}(t) = t^{a(\Bla)-rn(\Bla)}
    \sum_{\Bnu \trleq \Bla}g^{\Bmu}_{\Bnu}(t^r)
         \wt K_{\la^{(1)}, \nu^{(1)}}(t^r)\cdots \wt K_{\Bla^{(r)}, \Bnu^{(r)}}(t^r).
\end{equation*}
\end{enumerate}
\end{cor}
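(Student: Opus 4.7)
The hypothesis of 3.5 ensures that $X_{\Bmu}$ is a single $G$-orbit, so by Remark 3.4 the count $g_{\Bnu,z}(q)$ does not depend on $z \in X_{\Bmu}^F$; I will denote it $g^{\Bmu}_{\Bnu}(q)$. My plan is to deduce both parts directly from Propositions 3.3 and 3.6. The first step is to couple the formula of Proposition 3.3 with the identity $\x_{L_{\Bla}}(z) = \IC^-_{\Bla,\Bmu}(q)$ of Proposition 3.6(i), which gives the master identity
\begin{equation*}
q^{n(\Bla)}\IC^-_{\Bla,\Bmu}(q) = \sum_{\Bnu \trleq \Bla} g^{\Bmu}_{\Bnu}(q)\prod_{i=1}^{r}\wt K_{\la^{(i)},\nu^{(i)}}(q),
\end{equation*}
valid for every prime power $q$ arising from a compatible $\Fq$-structure.

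Granting (i), part (ii) will then be purely formal: I substitute $q = t^r$ in the master identity, multiply through by $t^{a(\Bla)}$, and invoke $\wt K^-_{\Bla,\Bmu}(t) = t^{a(\Bla)}\IC^-_{\Bla,\Bmu}(t^r)$ from Proposition 3.6(ii) to rearrange into the formula (3.7.1).

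For (i), the plan is to exploit the triangularity of the Kostka polynomials in the dominance order. Since $\wt K_{\la,\nu}(t) = 0$ unless $\la \ge \nu$ and $\wt K_{\la,\la}(t) = t^{n(\la)}$, the matrix $\bigl(\prod_i \wt K_{\la^{(i)},\nu^{(i)}}(q)\bigr)_{\Bla,\Bnu \in \SP(\Bm)}$ is upper triangular in $\trleq$ with diagonal entry $q^{n(\Bla)}$. Solving the master identity recursively along $\trleq$, and assuming inductively that $g^{\Bmu}_{\Bnu}(t) \in \BZ[t]$ for every $\Bnu \lneq \Bla$, one obtains
\begin{equation*}
g^{\Bmu}_{\Bla}(q) = \IC^-_{\Bla,\Bmu}(q) - q^{-n(\Bla)}\sum_{\Bnu \lneq \Bla} g^{\Bmu}_{\Bnu}(q)\prod_{i=1}^r\wt K_{\la^{(i)},\nu^{(i)}}(q),
\end{equation*}
so a priori $g^{\Bmu}_{\Bla}(q) \in \BZ[q, q^{-1}]$.

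The main obstacle is then to show that no strictly negative powers of $q$ actually appear, i.e., that $g^{\Bmu}_{\Bla}(q)$ defines a polynomial in $q$. The key input is that $g^{\Bmu}_{\Bla}(q) = |\SG_{\Bla,z}^F|$ is a non-negative integer for every prime power $q$. Decomposing $g^{\Bmu}_{\Bla}(q) = f_+(q) + f_-(q)$ into its polynomial and strictly-negative-power parts, one has $f_-(q) \to 0$ as $q \to \infty$; so for all sufficiently large prime powers $q$, $f_-(q)$ is an integer of absolute value less than $1$ and hence vanishes. A Laurent polynomial vanishing at infinitely many prime powers is identically zero, forcing $f_- \equiv 0$ and $g^{\Bmu}_{\Bla}(t) \in \BZ[t]$. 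This completes (i) and, together with the derivation above, the statement (ii).
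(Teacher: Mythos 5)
Your proposal is correct and its skeleton is the paper's: combine Propositions 3.3 and 3.6(i) into the master identity, solve it by triangularity of the matrix $\bigl(\prod_i\wt K_{\la^{(i)},\nu^{(i)}}(q)\bigr)$, then substitute $q\mapsto t^r$ and invoke Proposition 3.6(ii) to get (3.7.1). Where you differ is in how polynomiality in part (i) is secured. The paper's (terse) argument rests on the fact — implicit in its assertion that the transition matrix in (3.7.2) is unitriangular — that each entry $q^{-n(\Bla)}\prod_i\wt K_{\la^{(i)},\nu^{(i)}}(q)$ is already an honest element of $\BZ[q]$: since $K_{\la,\nu}(t)$ has degree $n(\nu)-n(\la)$, the modification $\wt K_{\la,\nu}(t)=t^{n(\nu)}K_{\la,\nu}(t^{-1})$ is divisible by $t^{n(\la)}$, so $\prod_i\wt K_{\la^{(i)},\nu^{(i)}}(t)$ is divisible by $t^{n(\Bla)}$. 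Inverting a unitriangular matrix with entries in $\BZ[q]$ gives again a matrix over $\BZ[q]$, so $g^{\Bmu}_{\Bnu}(q)$ is directly a $\BZ[q]$-combination of the $\IC^-_{\Bla,\Bmu}(q)\in\BZ[q]$, and replacing $q$ by $t$ produces the desired polynomial with no further argument. You instead accept an a priori answer in $\BZ[q,q^{-1}]$ and eliminate the negative-degree tail by the arithmetic argument: $g^{\Bmu}_{\Bla}(q)=|\SG_{\Bla,z}^F|$ is a non-negative integer for the infinitely many prime powers $q=q_0^n$, and the strictly negative part, being an integer tending to $0$, must vanish at all large such $q$ and hence identically. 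This works and is self-contained (it never invokes the degree bound on Kostka polynomials), but it is longer than the algebraic route the paper has in mind; it is worth knowing both, since the divisibility observation is what actually makes the paper's one-line ``a similar formula makes sense if we replace $q$ by $t$'' legitimate. One small bookkeeping slip: in deriving (ii) you should multiply the substituted identity by $t^{a(\Bla)-rn(\Bla)}$ (not $t^{a(\Bla)}$), since the factor $q^{n(\Bla)}=t^{rn(\Bla)}$ on the left must first be cancelled; this is immediate and does not affect the conclusion.
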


\begin{proof}
By Proposition 3.6 (i) and Proposition 3.3, we have

\begin{equation*}
\tag{3.7.2}
\IC^-_{\Bla, \Bmu}(q) = q^{-n(\Bla)}\sum_{\Bnu \trleq \Bla}g_{\Bnu}^{\Bmu}(q)
                         \wt K_{\la^{(1)},\nu^{(1)}}(q)\cdots \wt K_{\la^{(r)}, \nu^{(r)}}(q)
\end{equation*}
By fixing $\Bmu$, we consider two sets of functions 
$\{ \IC^-_{\Bla \Bmu}(q) \mid \Bla \in \SP_{n,r} \}$ and 
$\{ g^{\Bmu}_{\Bnu}(q) \mid \Bnu \in \SP_{n,r} \}$.   
If we notice that $\wt K_{\la^{(1)}, \nu^{(1)}}(q)\cdots \wt K_{\la^{(r)}, \nu^{(r)}}(q) 
        = q^{n(\Bla)}$ for $\Bnu = \Bla$, (3.7.2) shows that the transition matrix 
between those two sets is unitriangular. 
Hence $g^{\Bmu}_{\Bnu}(q)$ is determined from $\IC^-_{\Bla, \Bmu}(q)$, and a similar 
formula makes sense if we replace $q$ by $t$. This implies (i).  
(ii) now follows from (3.7.2) by replacing $q$ by $t$. 
\end{proof}

\para{3.8.}
In what follows, we assume that $\Bmu$ is of the form 
$\Bmu = (-, \dots, -, \xi)$ with $\xi \in \SP_n$. 
In this case, $g^{\Bmu}_{\Bnu}(t)$ coincides with the polynomial 
$g^{\xi}_{\nu^{(1)}, \dots, \nu^{(r)}}(t)$ 
obtained from  
$G^{\xi}_{\nu^{(1)}, \dots, \nu^{(r)}}(\Fo)$ discussed in [M, II, 2].  
On the other hand, we define a polynomial $f^{\xi}_{\nu^{(1)}, \dots, \nu^{(r)}}(t)$ by 
\begin{equation*}
\tag{3.8.1}
P_{\nu^{(1)}}(y;t)\cdots P_{\nu^{(r)}}(y;t) = 
       \sum_{\xi \in \SP_n}f^{\xi}_{\nu^{(1)}, \dots, \nu^{(r)}}(t)P_{\xi}(y;t).
\end{equation*}
In the case where $r = 2$, $g^{\xi}_{\nu^{(1)}, \nu^{(2)}}(t)$ coincides with the Hall polynomial, 
and a simple formula relating it with $f^{\xi}_{\nu^{(1)}, \nu^{(2)}}(t)$ is konwn 
([M, III (3.6)]).  In the general case, we also have a formula 

\begin{equation*}
\tag{3.8.2}
g^{\xi}_{\nu^{(1)}, \dots, \nu^{(r)}}(t) = t^{n(\xi)- n(\Bnu)}
                      f^{\xi}_{\nu^{(1)}, \dots, \nu^{(r)}}(t\iv).
\end{equation*} 
The proof is easily reduced to [M, III (3.6)].
\par
For partitions $\la, \nu^{(1)}, \dots, \nu^{(r)}$, we 
define an integer $c^{\la}_{\nu^{(1)}, \dots, \nu^{(r)}}$ by 

\begin{equation*}
s_{\nu^{(1)}}\cdots s_{\nu^{(r)}} = \sum_{\la}c^{\la}_{\nu^{(1)},\dots, \nu^{(r)}}s_{\la}.
\end{equation*} 
In the case where $r = 2$, $c^{\la}_{\nu^{(1)},\nu^{(2)}}$ coincides with the Littlewood-Richardson 
coefficient. 
\par
For $\Bla \in \SP_{n,r}$, put 
\begin{equation*}
\tag{3.8.3}
b(\Bla) = a(\Bla) - r\cdot n(\Bla) = |\la^{(2)}| + 2|\la^{(3)}| + \cdots + (r-1)|\la^{(r)}|.
\end{equation*}
The following lemma is a generalization of [LS, Lemma 3.4]. 

\begin{lem}  
Let $\Bla, \Bmu \in \SP_{n,r}$, and assume that 
$\Bmu = (-, \dots, -, \xi)$.  Then we have

\begin{align*}
\tag{3.9.1}
K^-_{\Bla, \Bmu}(t) &= t^{b(\Bmu) - b(\Bla)}
          \sum_{\Bnu \trleq \Bla}
                       f^{\xi}_{\nu^{(1)}, \dots, \nu^{(r)}}(t^{r})
                          K_{\la^{(1)}, \nu^{(1)}}(t^r)\cdots K_{\la^{(r)}, \nu^{(r)}}(t^r), \\ 
\tag{3.9.2}
K^-_{\Bla, \Bmu}(t) &= 
        t^{b(\Bmu) - b(\Bla)}
             \sum_{\e \in \SP_n}c^{\eta}_{\la^{(1)}, \dots, \la^{(r)}}K_{\eta, \xi}(t^r). 
\end{align*}
\end{lem}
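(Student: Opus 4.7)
The plan is to derive (3.9.1) from Corollary 3.7 by applying the standard symmetry between $K$ and $\wt K$, and then derive (3.9.2) from (3.9.1) by a generating-function manipulation using (3.8.1). Neither step requires new geometry; everything reduces to bookkeeping in the Hall-Littlewood and Schur bases.

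For (3.9.1), I would start from the formula in Corollary 3.7(ii), which I rewrite using $a(\Bla)-r\,n(\Bla)=b(\Bla)$ as
\begin{equation*}
\wt K^-_{\Bla,\Bmu}(t) \;=\; t^{b(\Bla)}\sum_{\Bnu\trleq\Bla} g^{\Bmu}_{\Bnu}(t^r)\prod_{i=1}^r \wt K_{\la^{(i)},\nu^{(i)}}(t^r).
\end{equation*}
Then apply the substitution $t\mapsto t^{-1}$ and multiply through using $\wt K^-_{\Bla,\Bmu}(t^{-1})=t^{-a(\Bmu)}K^-_{\Bla,\Bmu}(t)$ (and similarly for the classical $\wt K_{\la^{(i)},\nu^{(i)}}$), converting each modified Kostka into the unmodified one. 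The factors of $t^{n(\cdot)}$ produced by the definition $\wt K_{\la,\mu}(t)=t^{n(\mu)}K_{\la,\mu}(t^{-1})$ contribute $t^{-rn(\Bnu)}$ in total, while replacing $g^{\xi}_{\Bnu}(t^{-r})$ by $f^{\xi}_{\Bnu}(t^r)$ via (3.8.2) contributes $t^{-r(n(\xi)-n(\Bnu))}$. The crux is then simply collecting exponents: since $\Bmu=(-,\dots,-,\xi)$ gives $n(\Bmu)=n(\xi)$ and $a(\Bmu)=r\,n(\xi)+b(\Bmu)$, the cumulative power of $t$ collapses to $b(\Bmu)-b(\Bla)$, which yields (3.9.1). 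The one place where I would be careful is that the $t^r$ inside each Kostka polynomial and each $f$ must be tracked separately from the outer $t$-powers; if I drop an $r$ the identity fails.

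For (3.9.2), I would take (3.9.1) as given and work inside the ring of symmetric functions. Setting $u=t^r$, the inner sum on the right of (3.9.1) is
\begin{equation*}
S(u) \;=\; \sum_{\Bnu} f^{\xi}_{\nu^{(1)},\dots,\nu^{(r)}}(u)\prod_{i=1}^r K_{\la^{(i)},\nu^{(i)}}(u),
\end{equation*}
where the sum extends over all $\Bnu$ (the condition $\Bnu\trleq\Bla$ is automatic since $K_{\la^{(i)},\nu^{(i)}}(u)=0$ unless $\nu^{(i)}\le\la^{(i)}$). Using (1.1.2) to expand each $s_{\la^{(i)}}(y)=\sum_{\nu^{(i)}}K_{\la^{(i)},\nu^{(i)}}(u)P_{\nu^{(i)}}(y;u)$ and then (3.8.1) to regroup the resulting products of $P$'s, the product $\prod_i s_{\la^{(i)}}(y)$ expands as $\sum_\xi S(u)P_\xi(y;u)$. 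On the other hand, expanding $\prod_i s_{\la^{(i)}}=\sum_\eta c^{\eta}_{\la^{(1)},\dots,\la^{(r)}}s_\eta$ first and then applying (1.1.2) to each $s_\eta$ gives $\sum_\xi\bigl(\sum_\eta c^{\eta}_{\Bla}K_{\eta,\xi}(u)\bigr)P_\xi(y;u)$. Comparing coefficients of $P_\xi(y;u)$ (allowed since the $P_\xi(y;u)$ are a $\BQ(u)$-basis of $\vL_\BQ$) identifies $S(u)$ with $\sum_\eta c^{\eta}_{\Bla}K_{\eta,\xi}(u)$, and substituting back into (3.9.1) gives (3.9.2).

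The main obstacle, and essentially the only one, is the exponent accounting in the first step: one must match $t^{b(\Bmu)-b(\Bla)}$ exactly, which requires the identity $a(\Bmu)-b(\Bla)-r\,n(\xi)=b(\Bmu)-b(\Bla)$ and in turn relies on the hypothesis $\Bmu=(-,\dots,-,\xi)$ so that $n(\Bmu)=n(\xi)$. If instead $\Bmu$ had mass on other components, the same computation would yield a different, less clean exponent, so the shape of $\Bmu$ is used precisely at this point. Once (3.9.1) is in hand, (3.9.2) is purely formal.
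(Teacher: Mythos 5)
Your proof is correct and takes essentially the same route as the paper: you pass from (3.7.1) to (3.9.1) via the substitution $t\mapsto t^{-1}$ and (3.8.2), tracking the exponents exactly as the paper does in its intermediate formula (3.9.3), and then you deduce (3.9.2) by comparing the two expansions of $s_{\la^{(1)}}(y)\cdots s_{\la^{(r)}}(y)$ in the Hall--Littlewood basis (via (1.1.2) and (3.8.1) on one side, and via Littlewood--Richardson coefficients and (1.1.2) on the other). The exponent bookkeeping you highlight, using $n(\Bmu)=n(\xi)$ for $\Bmu=(-,\dots,-,\xi)$ so that $a(\Bmu)-rn(\xi)=b(\Bmu)$, is precisely the computation in the paper.
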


\begin{proof}
The formula (3.7.1) can be rewritten as 

\begin{equation*}
\tag{3.9.3}
K^-_{\Bla, \Bmu}(t) = t^{a(\Bmu) - a(\Bla) + rn(\Bla)}\sum_{\Bnu \trleq \Bla}t^{-rn(\Bnu)}
                       g^{\xi}_{\nu^{(1)}, \dots, \nu^{(r)}}(t^{-r})
                          K_{\la^{(1)}, \nu^{(1)}}(t^r)\cdots K_{\la^{(r)}, \nu^{(r)}}(t^r). 
\end{equation*}
Substituting (3.8.2) into (3.9.3), we obtain (3.9.1). 
Next we show (3.9.2). 
One can write as 

\begin{equation*}
s_{\la^{(i)}}(y) = \sum_{\nu^{(i)}}K_{\la^{(i)}, \nu^{(i)}}(t)P_{\nu^{(i)}}(y;t).
\end{equation*}
Hence 

\begin{align*}
\tag{3.9.4}
s_{\la^{(1)}}(y)\cdots s_{\la^{(r)}}(y) &= \sum_{\Bnu \in \SP_{n,r}}
    K_{\la^{(1)}, \nu^{(1)}}(t)\cdots K_{\la^{(r)}, \nu^{(r)}}(t)
         P_{\nu^{(1)}}(y;t)\cdots P_{\nu^{(r)}}(y;t) \\
    &= \sum_{\Bnu \in \SP_{n,r}}\sum_{\xi \in \SP_n}f^{\xi}_{\nu^{(1)}, \dots, \nu^{(r)}}(t)
                    K_{\la^{(1)},\nu^{(1)}}(t)\cdots K_{\la^{(r)},\nu^{(r)}}(t)P_{\xi}(y;t).
\end{align*}
On the other hand, 

\begin{align*}
\tag{3.9.5}
s_{\la^{(1)}}(y)\cdots s_{\la^{(r)}}(y) &= \sum_{\e \in \SP_n}
       c^{\eta}_{\la^{(1)}, \dots, \la^{(r)}}s_{\eta}(y)  \\
         &= \sum_{\eta \in \SP_n}c^{\eta}_{\la^{(1)}, \dots, \la^{(r)}}
                \sum_{\xi \in \SP_n} K_{\eta, \xi}(t)P_{\xi}(y;t).
\end{align*}
By comparing (3.9.4) and (3.9.5), we have an equality for each $\xi \in \SP_n$,  

\begin{equation*}
\sum_{\e \in \SP_n}c^{\eta}_{\la^{(1)}, \dots, \la^{(r)}}K_{\eta, \xi}(t) 
                    = \sum_{\Bnu \in \SP_{n,r}}f^{\xi}_{\nu^{(1)}, \dots, \nu^{(r)}}(t)
                          K_{\la^{(1)}, \nu^{(1)}}(t), \dots K_{\la^{(r)}, \nu^{(r)}}(t).
\end{equation*}
Combining this with (3.9.1), we obtain (3.9.2). The lemma is proved.
\end{proof}

\para{3.10.}
Let $\e' = \la' - \th', \e'' = \la'' - \th''$ be skew diagrams, where 
$\th' \subset \la', \th'' \subset \la''$ are partitions. 
We define a new skew diagram $\e'*\e'' = \la - \th$ as follows; 
write the partitions $\la', \la''$ as 
$\la' = (\la'_1, \dots, \la'_{k'}), \la'' = (\la''_1, \dots, \la''_{k''})$ 
with $\la'_{k'} > 0, \la''_{k''} > 0$. 
Put $a = \la_1''$.  We define a partition $\la = (\la_1, \dots, \la_{k' + k''})$ 
by 

\begin{equation*}
\la_i = \begin{cases}
          \la'_i + a  &\quad\text{ for } 1 \le i \le k', \\
           \la''_{i-k'}  &\quad\text{ for } k' + 1 \le i \le k' + k''.
        \end{cases}
\end{equation*}
Write partitions $\th', \th''$ as $\th' = (\th'_1, \dots, \th'_{k'}), 
\th'' = (\th''_1, \dots, \th''_{k''})$ with $\th'_{k'} \ge 0$, 
$\th''_{k''} \ge 0$.  
We define a partition $\th = (\th_1, \dots, \th_{k' + k''})$, in a similar 
way as above,  by 

\begin{equation*}
\th_i = \begin{cases}
          \th'_i + a  &\quad\text{ for } 1 \le i \le k', \\
           \th''_{i-k'}  &\quad\text{ for } k' + 1 \le i \le k' + k''.
        \end{cases}
\end{equation*}
We have $\th \subset \la$, and the skew  diagram  $\e'*\e'' = \la - \th$ 
can be defined. 
\par
For $\la, \mu \in \SP_n$, let $SST(\la, \mu)$ be the set of semistandard tableaux of 
shape $\la$ and weight $\mu$. 
Let $\Bla \in \SP_{n,r}$.  An $r$-tuple $T = (T^{(1)}, \dots, T^{(r)})$ 
is called a semistandard tableau of shape $\Bla$ if $T^{(i)}$ is a semistandard 
tableau of shape $\la^{(i)}$ with respect to the letters $\{ 1, \dots, n\}$. 
We denote by $SST(\Bla)$ the set of semistandard tableaux of shape $\Bla$. 
For $\Bla \in \SP_{n,r}$, let $\wt\Bla$ be the skew diagram 
$\la^{(1)}*\la^{(2)}*\cdots *\la^{(r)}$.  
Then $T \in SST(\Bla)$ is regarded as a usual semistandard tableau $\wt T$ 
associated to the skew diagram $\wt\Bla$. 
Assume $\pi \in \SP_n$.  We say that $T \in SST(\Bla)$ has weight $\pi$ if the 
corresponidng tableau $\wt T$ has shape $\wt\Bla$ and weight $\pi$. 
We denote by $SST(\Bla, \pi)$ the set of semistandard tableaux of shape $\Bla$ and 
weight $\pi$. 

\para{3.11.}
In [M, I, (9.4)], a bijective map  $\vT$

\begin{equation*}
\tag{3.11.1}
\vT : SST(\wt\Bla, \pi) \isom \coprod_{\nu \in \SP_n}(SST^0(\wt\Bla, \nu) \times SST(\nu, \pi))
\end{equation*}
was constructed, where $SST^0(\wt\Bla, \nu)$ is the set of tableau $T$ such that 
the associated word $w(T)$ is a lattice permutation (see [M, I, 9] for the definition).
Under the identification $SST(\wt\Bla, \pi) \simeq SST(\Bla, \pi)$, 
the subset $SST^0(\Bla,\nu)$ of $SST(\Bla, \nu)$ is also defined. 
Then we can regard $\vT$ as a bijection with respect to the set $SST(\Bla, \pi)$ 
(and $SST^0(\Bla, \nu)$).
\par
In the case where $r = 2$, it is shown in [LS, Cor. 3.9] that $|SST^0(\Bla, \nu)|$ 
coincides with the Littlewood-Richardson coefficient $c^{\nu}_{\la^{(1)}, \la^{(2)}}$. 
A similar argument can be applied also to the general case, and we have

\begin{cor}  
Assume that $\Bla \in \SP_{n,r}, \nu \in \SP_n$. Then we have

\begin{equation*}
|SST^0(\Bla, \nu)| = c^{\nu}_{\la^{(1)}, \dots, \la^{(r)}}.
\end{equation*}
\end{cor}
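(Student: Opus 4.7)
The plan is to reduce the statement to a coefficient comparison between two expansions of the skew Schur function $s_{\wt\Bla}$. First I would observe that, directly from the construction of the operation $*$ in 3.10, the skew diagram $\wt\Bla = \la^{(1)} * \la^{(2)} * \cdots * \la^{(r)}$ is built so that each successive component $\la^{(i+1)}$ sits strictly below and strictly to the left of $\la^{(1)} * \cdots * \la^{(i)}$. Consequently the $r$ pieces of $\wt\Bla$ occupy pairwise disjoint rows and pairwise disjoint columns. A semistandard filling of $\wt\Bla$ with entries in $\{1,\dots,n\}$ is therefore nothing but an independent choice of semistandard fillings of each $\la^{(i)}$, and summing over weights yields the factorization
\begin{equation*}
s_{\wt\Bla}(y) = s_{\la^{(1)}}(y)\,s_{\la^{(2)}}(y)\cdots s_{\la^{(r)}}(y)
\end{equation*}
in $\vL$.

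Next I would combine this with two standard expansions. On the one hand, the definition of $c^\nu_{\la^{(1)},\dots,\la^{(r)}}$ in 3.8 rewrites the right-hand side as $\sum_{\nu \in \SP_n} c^\nu_{\la^{(1)},\dots,\la^{(r)}}\, s_\nu(y)$. On the other hand, the Littlewood--Richardson rule applied to the skew shape $\wt\Bla$ (Macdonald I.(9.4)) gives
\begin{equation*}
s_{\wt\Bla}(y) = \sum_{\nu \in \SP_n} |SST^0(\wt\Bla,\nu)|\, s_\nu(y),
\end{equation*}
where the coefficient counts those semistandard tableaux of shape $\wt\Bla$ and weight $\nu$ whose reading word is a lattice permutation. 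Via the identification $SST(\wt\Bla,\nu) \simeq SST(\Bla,\nu)$ of 3.11, which by definition transports $SST^0(\wt\Bla,\nu)$ onto $SST^0(\Bla,\nu)$, this coefficient equals $|SST^0(\Bla,\nu)|$.

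Comparing coefficients of $s_\nu$ in the two expansions of $s_{\wt\Bla}$ and using the linear independence of the Schur functions in $\vL^n$ yields the claimed identity. The only point that really requires verification is the skew Schur factorization in the first paragraph, i.e.\ that the $*$-construction truly places the components in pairwise disjoint rows and columns so that their fillings decouple; this is a direct unwinding of the formulas for $\la$ and $\th$ in 3.10 and is precisely the combinatorial observation underlying the $r = 2$ case [LS, Cor. 3.9]. Once it is in hand the general-$r$ statement follows by a routine iteration and requires no additional ideas.
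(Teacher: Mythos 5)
Your proof is correct and is essentially the argument the paper delegates to [LS, Cor.\ 3.9] for $r = 2$ and its routine iteration: the $*$-construction yields a skew shape whose components sit in pairwise disjoint rows and columns, so the skew Schur function factors as $s_{\wt\Bla} = s_{\la^{(1)}}\cdots s_{\la^{(r)}}$, and comparing the Littlewood--Richardson expansion of the skew shape with the definition of $c^{\nu}_{\la^{(1)},\dots,\la^{(r)}}$ gives the identity. (Equivalently, one can take cardinalities in the bijection $\vT$ of 3.11 and invert the unitriangular Kostka matrix; this is the same content phrased through Macdonald I.(9.4) rather than the LR rule directly.)
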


\para{3.13.}
For a semistandard tableau $S$, the charge $c(S)$ is defined as in [M, III, 6].
It is known that Lascoux-Sch\"utzenberger Theorem  
([M, III, (6.5)]) gives a combinatorial description of 
Koskta polynomials $K_{\la,\mu}(t)$ in terms of sesmistandard tableaux, 

\begin{equation*}
\tag{3.13.1}
K_{\la,\mu}(t) = \sum_{S \in SST(\la, \mu)}t^{c(S)}.
\end{equation*}

In the case where $r = 2$, a similar formula was proved for $K_{\Bla, \Bmu}(t)$ 
in [LS, Thm. 3.12], in the special case where $\Bmu = (-,\mu'')$.  
Here we consider $K_{\Bla, \Bmu}(t)$ for general $r$. 
Assume that $\Bla \in \SP_{n,r}$ and $\xi \in \SP_n$.  For $T \in SST(\Bla, \xi)$,
we write $\vT(T) = (D, S)$ with $S \in SST(\nu, \xi)$ for some $\nu$.    
we define a charge $c(T)$ of $T$ by $c(T) = c(S)$.  
We have the following theorem.  Note that the proof is quite similar to 
[LS]. 

\begin{thm}  
Let $\Bla, \Bmu \in \SP_{n,r}$, and assume that $\Bmu = (-, \dots, -, \xi)$. Then 

\begin{equation*}
K^-_{\Bla,\Bmu}(t) = t^{b(\Bmu) - b(\Bla)}\sum_{T \in SST(\Bla, \xi)}t^{r\cdot c(T)}.
\end{equation*}
\end{thm}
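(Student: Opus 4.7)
The plan is to deduce Theorem 3.14 by combining Lemma 3.9\,(3.9.2), the Lascoux--Sch\"utzenberger formula (3.13.1) applied with parameter $t^r$, Corollary 3.12, and the bijection $\vT$ of (3.11.1). Essentially, all the hard combinatorial and geometric work has been done; what remains is a book-keeping manipulation of sums.

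First, I would start from (3.9.2), which gives
\begin{equation*}
K^-_{\Bla,\Bmu}(t) = t^{b(\Bmu)-b(\Bla)}\sum_{\eta \in \SP_n} c^{\eta}_{\la^{(1)},\dots,\la^{(r)}} K_{\eta,\xi}(t^r).
\end{equation*}
Next, I would substitute the Lascoux--Sch\"utzenberger formula (3.13.1) (with $t$ replaced by $t^r$) to rewrite $K_{\eta,\xi}(t^r) = \sum_{S \in SST(\eta,\xi)} t^{r\cdot c(S)}$, and I would invoke Corollary 3.12 to identify the Littlewood--Richardson-type coefficient as $c^{\eta}_{\la^{(1)},\dots,\la^{(r)}} = |SST^0(\Bla,\eta)|$. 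Together these yield
\begin{equation*}
K^-_{\Bla,\Bmu}(t) = t^{b(\Bmu)-b(\Bla)}\sum_{\eta \in \SP_n}\sum_{(D,S) \in SST^0(\Bla,\eta)\times SST(\eta,\xi)} t^{r\cdot c(S)}.
\end{equation*}

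Finally, I would apply the bijection $\vT$ of (3.11.1), which, under the identification $SST(\wt\Bla,\xi)\simeq SST(\Bla,\xi)$, matches the index set of the double sum above with $SST(\Bla,\xi)$, and by the very definition $c(T) := c(S)$ where $\vT(T) = (D,S)$, the exponent $t^{r\cdot c(S)}$ becomes $t^{r\cdot c(T)}$. This gives
\begin{equation*}
K^-_{\Bla,\Bmu}(t) = t^{b(\Bmu)-b(\Bla)}\sum_{T \in SST(\Bla,\xi)} t^{r\cdot c(T)},
\end{equation*}
as required.

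There is no substantial obstacle: every ingredient is already in place. The only point that warrants care is verifying that the weight/shape conventions match up correctly when passing between $SST(\wt\Bla,\xi)$ and $SST(\Bla,\xi)$ and when using $\vT$, so that the decomposition indexing $\eta$ in (3.9.2) truly corresponds to the intermediate shape $\nu$ in (3.11.1). This parallels the $r=2$ argument of [LS, Thm.~3.12] verbatim, and I would simply remark that the proof runs as in loc.\ cit., since the generalizations of the key inputs (Corollary 3.12 and Lemma 3.9) to arbitrary $r$ are already established.
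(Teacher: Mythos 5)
Your proposal is correct and uses essentially the same ingredients and approach as the paper's own proof: Lemma 3.9\,(3.9.2), the Lascoux--Sch\"utzenberger formula (3.13.1) with $t\mapsto t^r$, Corollary 3.12, the bijection $\vT$ of (3.11.1), and the definition $c(T)=c(S)$. The only cosmetic difference is the direction of the manipulation --- the paper first rewrites $\sum_{T\in SST(\Bla,\xi)}t^{c(T)}$ as $\sum_{\nu}c^{\nu}_{\la^{(1)},\dots,\la^{(r)}}K_{\nu,\xi}(t)$ and then invokes (3.9.2), whereas you expand (3.9.2) and collapse the resulting double sum --- but these are the same computation read in opposite order.
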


\begin{proof}
We define a map $\Psi : SST(\Bla, \xi) \to \coprod_{\nu \in \SP_n}SST(\nu,\xi)$ 
by $T \mapsto S$, where $\vT(T) = (D, S)$. Then by Corollary 3.12, for each 
$S \in SST(\nu, \xi)$, the set $\Psi\iv(S)$ has the cardinality 
$c^{\xi}_{\la^{(1)}, \dots, \la^{(r)}}$, and by definition, any $T \in \Psi\iv(S)$ has the
charge $c(T) = c(S)$.  Hence

\begin{align*}
\sum_{T \in SST(\Bla, \xi)}t^{c(T)} &= \sum_{\nu \in \SP_n}
                                       \sum_{S \in SST(\nu, \xi)}
          c^{\nu}_{\la^{(1)}, \dots, \la^{(r)}}t^{c(S)}  \\
             &= \sum_{\nu \in \SP_n}c^{\nu}_{\la^{(1)}, \dots, \la^{(r)}}K_{\nu, \xi}(t).
\end{align*}   
The last equality follows from (3.13.1). 
The theorem now follows from  (3.9.2). 
\end{proof}

\begin{cor}  
Under the assumption of Theorem 3.14, we have

\begin{equation*}
K^-_{\Bla, \Bmu}(1) = |SST(\Bla, \xi)|.
\end{equation*}
\end{cor}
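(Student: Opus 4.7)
The proof is an immediate specialization of Theorem 3.14. The plan is to set $t = 1$ in the identity
\begin{equation*}
K^-_{\Bla,\Bmu}(t) = t^{b(\Bmu) - b(\Bla)}\sum_{T \in SST(\Bla, \xi)}t^{r\cdot c(T)}
\end{equation*}
just established. At $t=1$ the prefactor $t^{b(\Bmu)-b(\Bla)}$ evaluates to $1$, and each summand $t^{r\cdot c(T)}$ also evaluates to $1$, so the right hand side collapses to the cardinality $|SST(\Bla,\xi)|$.

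The only mild point worth noting is that $b(\Bmu)-b(\Bla)$ could a priori be negative, so one should check that the expression on the right of Theorem~3.14 is a genuine polynomial (or at least a Laurent polynomial) in $t$ regular at $t=1$; since $K^-_{\Bla,\Bmu}(t)$ is defined as an element of $\BQ(t)$ and equals this expression, specialization at $t=1$ is unambiguous. Thus there is no real obstacle: the corollary follows from Theorem~3.14 by evaluation at $t=1$.
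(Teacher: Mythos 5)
Your proof is correct and is precisely the paper's intended argument: Corollary 3.15 appears without a separate proof because it is the immediate $t=1$ specialization of Theorem 3.14, exactly as you describe. (Your caveat about the sign of $b(\Bmu)-b(\Bla)$ is harmless but unnecessary: with $\Bmu=(-,\dots,-,\xi)$ one has $b(\Bmu)=(r-1)n\ge b(\Bla)$, and in any case $t^{b(\Bmu)-b(\Bla)}$ evaluates to $1$ at $t=1$ regardless of sign.)
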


\para{3.16.}
In the rest of this section, we shall give an alternate description 
of the polynomial $g^{\Bmu}_{\Bnu}(t)$ in the case where 
$\Bmu = (-, \dots, -, \xi)$. 
For $\Bnu \in \SP_{n,r}$, put 
$R_{\Bnu}(x;t) = P_{\nu^{(1)}}(x^{(1)};t^r)\cdots P_{\nu^{(r)}}(x^{(r)};t^r)$.
Then $\{ R_{\Bnu} \mid \Bnu \in \SP_{n,r} \}$ gives a basis of $\Xi^n[t]$. 
We define funtions $h^{\Bmu}_{\Bnu}(t) \in \BQ(t)$ by the condition that

\begin{equation*}
\tag{3.16.1}
R_{\Bnu}(x;t) = \sum_{\Bmu \in \SP_{n,r}}h^{\Bmu}_{\Bnu}(t)P^-_{\Bmu}(x;t).
\end{equation*}

The following formula is a generalization of Proposition 4.2 in [LS].

\begin{prop}
Assume that $\Bmu = (-,\dots,-,\xi)$. Then 

\begin{equation*}
h^{\Bmu}_{\Bnu}(t) = t^{a(\Bmu) - a(\Bnu)}g^{\Bmu}_{\Bnu}(t^{-r}).
\end{equation*}
\end{prop}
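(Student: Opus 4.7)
The plan is to compare two expansions of $s_{\Bla}(x)$ in the basis $\{P^-_{\Bmu}(x;t)\}$ and to extract $h^{\Bmu}_{\Bnu}(t)$ by invoking Lemma~3.9. First I would expand each Schur factor $s_{\la^{(i)}}(x^{(i)})$ in the ordinary Hall--Littlewood basis with parameter $t^r$, using (1.1.2), obtaining
\begin{equation*}
s_{\Bla}(x) = \prod_{i=1}^r s_{\la^{(i)}}(x^{(i)}) = \sum_{\Bnu \in \SP_{n,r}} \prod_{i=1}^r K_{\la^{(i)},\nu^{(i)}}(t^r)\, R_{\Bnu}(x;t).
\end{equation*}
Substituting the definition (3.16.1) of $h^{\Bmu}_{\Bnu}$ and comparing with (1.4.1) yields the relation
\begin{equation*}
K^-_{\Bla, \Bmu}(t) = \sum_{\Bnu} \prod_{i=1}^r K_{\la^{(i)}, \nu^{(i)}}(t^r)\, h^{\Bmu}_{\Bnu}(t).
\end{equation*}

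Next, I would fix $\Bla \in \SP(\Bm)$. Since $K_{\la^{(i)},\nu^{(i)}}(t^r) = 0$ unless $|\la^{(i)}| = |\nu^{(i)}|$, the sum is supported on $\Bnu \in \SP(\Bm)$, and on this set $b(\Bla) = b(\Bnu)$ by (3.8.3). Inserting the formula (3.9.1) for $K^-_{\Bla,\Bmu}(t)$ and equating the two expressions gives
\begin{equation*}
\sum_{\Bnu \in \SP(\Bm)} \prod_i K_{\la^{(i)},\nu^{(i)}}(t^r)\, \Bigl[\, h^{\Bmu}_{\Bnu}(t) - t^{b(\Bmu) - b(\Bnu)}\, f^{\xi}_{\nu^{(1)},\dots,\nu^{(r)}}(t^r) \,\Bigr] = 0
\end{equation*}
for every $\Bla \in \SP(\Bm)$. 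Since the tensor product of the Kostka matrices on $\SP(\Bm)$ is unitriangular with respect to componentwise dominance (with all diagonal entries equal to $1$), it is invertible, and hence
\begin{equation*}
h^{\Bmu}_{\Bnu}(t) = t^{b(\Bmu) - b(\Bnu)}\, f^{\xi}_{\nu^{(1)},\dots,\nu^{(r)}}(t^r).
\end{equation*}

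To finish, I would convert $f$ into $g$ by means of (3.8.2): rewriting that identity and setting $s = t^r$ gives $f^{\xi}_{\nu^{(1)},\dots,\nu^{(r)}}(t^r) = t^{r(n(\xi) - n(\Bnu))}\, g^{\Bmu}_{\Bnu}(t^{-r})$. For $\Bmu = (-, \dots, -, \xi)$ one has $n(\Bmu) = n(\xi)$, and by (1.4.2) together with (3.8.3),
\begin{equation*}
a(\Bmu) - a(\Bnu) = r\bigl(n(\Bmu) - n(\Bnu)\bigr) + b(\Bmu) - b(\Bnu) = r\bigl(n(\xi) - n(\Bnu)\bigr) + b(\Bmu) - b(\Bnu),
\end{equation*}
so the two exponent contributions combine to give precisely $t^{a(\Bmu) - a(\Bnu)}$, yielding the proposition. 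The main subtlety is the exponent bookkeeping rather than the linear-algebra inversion (which is automatic from unitriangularity); the final exponent match depends crucially on the special form of $\Bmu$ through the identity $n(\Bmu) = n(\xi)$, which is precisely the obstruction to extending this argument to more general $\Bmu$.
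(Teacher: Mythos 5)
Your proof is correct and follows essentially the same route as the paper: derive (3.17.1) by expanding $s_{\Bla}$ in two ways, feed in Lemma 3.9, and invert the unitriangular Kostka matrix. The only cosmetic difference is that the paper plugs in the intermediate formula (3.9.3) directly (which already carries $g^{\Bmu}_{\Bnu}(t^{-r})$), whereas you go through (3.9.1) and then undo (3.8.2) at the end to convert $f$ back to $g$; the exponent bookkeeping, including the observation that $b(\Bla)=b(\Bnu)$ on $\SP(\Bm)$ and that $n(\Bmu)=n(\xi)$, is identical in substance.
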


\begin{proof}
The proof is quite similar to that of [LS, Prop. 4.2].  
For $\Bla \in \SP_{n,r}$, we have

\begin{align*}
s_{\Bla}(x) &= s_{\la^{(1)}}(x^{(1)})\cdots s_{\la^{(r)}}(x^{(r)}) \\
            &= \prod_{i=1}^r\sum_{\nu^{(i)}}K_{\la^{(i)},\nu^{(i)}}(t^r)P_{\nu^{(i)}}(x^{(i)};t^r)  \\
            &= \sum_{\Bnu}K_{\la^{(1)},\nu^{(1)}}(t^r)\cdots K_{\la^{(r)},\nu^{(r)}}(t^r)
                  \sum_{\Bmu \in \SP_{n,r}}h^{\Bmu}_{\Bnu}(t)P^-_{\Bmu}(x;t) \\
            &= \sum_{\Bmu \in \SP_{n,r}}\biggl(
                 \sum_{\Bnu}K_{\la^{(1)},\nu^{(1)}}(t^r)\cdots K_{\la^{(r)}, \nu^{(r)}}(t^r)
                    h^{\Bmu}_{\Bnu}(t)\biggr)P^-_{\Bmu}(x;t).
\end{align*}
Since $s_{\Bla}(x) = \sum_{\Bmu \in \SP_{n,r}}K^-_{\Bla,\Bmu}(t)P_{\Bmu}^-(x;t)$, 
by comparing the coefficients of $P^-_{\Bmu}(x;t)$, we have

\begin{equation*}
\tag{3.17.1}
K^-_{\Bla,\Bmu}(t) = \sum_{\Bnu \in \SP_{n,r}}h^{\Bmu}_{\Bnu}(t)
           K_{\la^{(1)}, \nu^{(1)}}(t^r)\cdots K_{\la^{(r)}, \nu^{(r)}}(t^r).
\end{equation*}

Now assume that $\Bmu = (-,\dots,-,\xi)$. If we notice that 
$K_{\la^{(i)},\nu^{(i)}}(t^r) \ne 0$ only when $|\la^{(i)}| = |\nu^{(i)}|$, 
(3.9.3) implies that 

\begin{equation*}
\tag{3.17.2}
K^-_{\Bla,\Bmu}(t) = \sum_{\Bnu  \in \SP_{n,r}}t^{a(\Bmu) - a(\Bnu)}g^{\Bmu}_{\Bnu}(t^{-r})
        K_{\la^{(1)}, \nu^{(1)}}(t^r)\cdots K_{\la^{(r)},\nu^{(r)}}(t^r).
\end{equation*}  
Since $(K_{\la^{(1)},\nu^{(1)}}(t^r)\cdots K_{\la^{(r)},\nu^{(r)}}(t^r))_{\Bla, \Bnu \in \SP_{n,r}}$
is a unitriangular matrix, the proposition follows by comparing (3.17.1) and (3.17.2).
\end{proof}

\bigskip

\par\vspace{1cm}
\noindent
T. Shoji \\
Department of Mathematics, Tongji University \\ 
1239 Siping Road, Shanghai 200092, P. R. China  \\
E-mail: \verb|shoji@tongji.edu.cn|

\end{document}